\documentclass[11pt]{article}
\usepackage{latexsym}
\usepackage{tikz}
\bibliographystyle{plain}

\newtheorem{theorem}{Theorem}

\newtheorem{proposition}[theorem]{Proposition}
\newtheorem{definition}{Definition}

\newtheorem{conjecture}[theorem]{Conjecture}

\newcommand{\qed}{\ \hfill\mbox{$\Box$}\vspace{\baselineskip}}
\newenvironment{proof}{\noindent {\bf Proof:}}{{\qed}}

\begin{document}
\renewcommand{\thetheoremvar}{\arabic{theorem}$'$}

\title{Graphs, Skeleta and Reconstruction of Polytopes}

\author{Margaret M. Bayer\thanks{This article is based in part on 
        work supported by the National Science Foundation
        under Grant No.\ DMS-1440140 while the author was in
        residence at the Mathematical Sciences Research Institute
        in Berkeley, California, and while supported by a University
        of Kansas sabbatical, during the Fall 2017 semester.}\\
        Department of Mathematics\\
        University of Kansas\\
        Lawrence, KS  66045-7594\\
        bayer@ku.edu}
\date{\relax}

\maketitle

\begin{quote}
Dedicated to Tibor Bisztriczky, G\'{a}bor Fejes T\'{o}th and Endre Makai,
on the occasion of their birthdays.
\end{quote}

\begin{abstract}
A renowned theorem of Blind and Mani, with a constructive proof by Kalai
and an efficiency proof by Friedman, 
shows that the whole face lattice of a simple polytope can be determined
from its graph.  This is part of a broader story of reconstructing face
lattices from partial information, first considered comprehensively
in Gr\"unbaum's
1967 book.  This survey paper includes varied results and open questions
 by many researchers 
on simplicial polytopes, nearly simple polytopes, cubical polytopes,
zonotopes, crosspolytopes, and Eulerian posets.

\end{abstract}

\section{Background}
This is a survey paper on reconstruction of polytopes, with an emphasis on
determining the face lattice of a polytope from its graph or from higher
dimensional skeleta.
We assume basic familiarity with the combinatorial theory of
convex polytopes.  For definitions the reader can consult 
Gr\"{u}nbaum \cite{grunbaum} or Ziegler \cite{ziegler}.
The reader is also directed to Kalai \cite{kalai-skel} for a survey of 
several topics on graphs and polytopes, including reconstruction.

How much combinatorial information is needed to determine the entire face
lattice of a convex polytope?
This is the subject of Chapter~12 of Gr{\"u}nbaum's book \cite{grunbaum}.
In some sense, little information is needed: the vertex-facet incidences
of the polytope determine the face lattice.
(In fact, for $d\ge 5$, the face lattice of a $d$-polytope can be reconstructed
from the incidences of edges and $(d-2)$-faces \cite[Exercise \#11 on
page 234]{grunbaum}.)
In another sense, a lot of information is needed: the bottom half of the
face lattice of a cyclic polytope is the same as the bottom part of a 
simplex of higher dimension.

In this paper all polytopes are real and convex.  We do not actually use
the embedding of the polytope in Euclidean space, but we restrict to those
that can be embedded.  We often blur the distinction between the polytope
and its face lattice.
Write $P\cong Q$ to mean that $P$ and $Q$ are combinatorially equivalent
polytopes, that is, their face lattices are isomorphic.

For $P$ a $d$-polytope and $k\le d-1$, the {\em $k$-skeleton} of $P$ is the
subcomplex of the boundary complex of $P$ consisting of all faces of $P$ of
dimension at most $k$.
Two polytopes are {\em $k$-equivalent} if their $k$-skeletons are
combinatorially equivalent.  The polytopes need not be of the same
dimension.

\begin{definition} {\em A $d$-polytope $P$  is {\em $k$-neighborly} if every
$k$-element subset of the vertices of $P$ is the vertex set of a face of $P$.
A $d$-polytope is {\em neighborly} if it is $\lfloor d/2 \rfloor$-neighborly.}
\end{definition}
Thus a $k$-neighborly polytope with $n+1$ vertices is $(k-1)$-equivalent to 
the $n$-simplex.

Cyclic polytopes are neighborly polytopes.  Thus cyclic
$d$-polytopes  are
$(\lfloor d/2 \rfloor -1)$-equivalent to higher dimensional polytopes.  In
general they can also be $(\lfloor d/2 \rfloor -1)$-equivalent to 
different $d$-polytopes.
Gr\"{u}nbaum \cite[7.2.4]{grunbaum} constructs
an example of a neighborly 4-polytope with 8 vertices that is not
combinatorially equivalent to the cyclic 4-polytope with 8 vertices.
The construction can be used to produce similar examples with more
vertices and in higher dimensions.
Padrol \cite{padrol} gives constructions of many neighborly polytopes.

The following results (due to Gale, Gr{\"u}nbaum, and Perles)
can be found in \cite{grunbaum} (in the original text or
in the additional notes).

\begin{theorem} \label{dim-ambig}

\mbox{}

\begin{enumerate}
\item
For $d\ge 2k+2$ the $d$-simplex is 
$k$-equivalent to the cyclic polytopes of dimensions $2k+2$, $2k+3$, \ldots, 
$d-1$ with $d+1$ vertices.
\item If $P$ and $Q$ are $k$-equivalent polytopes, $\dim(P)=d$, and 
$k\ge \lfloor d/2\rfloor$, then $\dim(Q)=d$.
\end{enumerate}
\end{theorem}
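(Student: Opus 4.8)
For part (1), the plan is to read the equivalences off directly from the neighborliness of cyclic polytopes together with the remark preceding the theorem. Recall that a $(k+1)$-neighborly polytope with $d+1$ vertices is $k$-equivalent to the $d$-simplex, since $(k+1)$-neighborliness forces every set of at most $k+1$ vertices to span a face, which is exactly the $k$-skeleton of the simplex on the same vertex set. So it suffices to check that each cyclic polytope in the asserted range is $(k+1)$-neighborly. Fix $m$ with $2k+2\le m\le d-1$ and let $C$ be the cyclic $m$-polytope with $d+1$ vertices (which exists since $m\le d-1<d+1$). Being neighborly, $C$ is $\lfloor m/2\rfloor$-neighborly, and $m\ge 2k+2$ gives $\lfloor m/2\rfloor\ge k+1$, so $C$ is $(k+1)$-neighborly and hence $k$-equivalent to the $d$-simplex. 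Letting $m$ range over $2k+2,\dots,d-1$ yields all the claimed equivalences.

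For part (2), I would first reduce to the extremal case $k=\lfloor d/2\rfloor$ (if the skeleta agree up to a larger $k$ they certainly agree up to $\lfloor d/2\rfloor$), and write $e=\dim Q$. Identifying vertices through the skeleton isomorphism, $P$ and $Q$ then share the same vertex set and the same faces of dimension at most $k$; in particular $e\ge k+1$. The cleanest case is $P$ a simplex: then $Q$ has exactly $d+1$ vertices and every $(k+1)$-subset of them spans a face, so $Q$ is $(\lfloor d/2\rfloor+1)$-neighborly. Having only $d+1$ vertices already forces $e\le d$, and since a non-simplex $r$-polytope is at most $\lfloor r/2\rfloor$-neighborly, the $(\lfloor d/2\rfloor+1)$-neighborliness of $Q$ rules out $e<d$; thus $Q$ is the $d$-simplex and $e=d$. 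This sharp neighborliness bound is the engine I would build everything else on.

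To pass from the simplex case to general $P$, I would induct on $d$ using vertex figures. For a common vertex $v$, the faces through $v$ of dimension at most $k$ are precisely the faces of dimension at most $k-1$ of the vertex figure, so $P/v$ and $Q/v$ are $(k-1)$-equivalent with $\dim(P/v)=d-1$; the inductive hypothesis would then give $\dim(Q/v)=d-1$, i.e. $e=d$. The catch is that this passage preserves the hypothesis $k-1\ge\lfloor(d-1)/2\rfloor$ only when $d$ is even; for odd $d$ with $k=(d-1)/2$ the vertex figure drops the parameter below the threshold and the induction stalls. I therefore expect the \emph{odd-dimensional case to be the main obstacle}, to be settled directly rather than inductively (even dimensions then reduce to it in one step). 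There $2k=d-1$, so two vertex-disjoint $k$-faces may coexist, and when such a pair is present the bound $\dim F+\dim G\le r-1$ for disjoint faces of an $r$-polytope yields the lower bound $e\ge 2k+1=d$. The genuinely delicate point is the reverse inequality $e\le d$: relative to a higher-dimensional $Q$ the shared skeleton sits \emph{below} the neighborly threshold $\lfloor e/2\rfloor$, which is exactly the regime in which part (1) manufactures dimensional ambiguity, so $e\le d$ cannot follow from skeletal combinatorics alone and must again be extracted from the convex-geometric neighborliness bound.
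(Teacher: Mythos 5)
Your part (1) is correct and is the standard (intended) argument: the cyclic $m$-polytope with $d+1$ vertices, $2k+2\le m\le d-1$, is $\lfloor m/2\rfloor$-neighborly, hence $(k+1)$-neighborly, and a $(k+1)$-neighborly polytope with $d+1$ vertices is $k$-equivalent to the $d$-simplex --- exactly the remark preceding the theorem. (Note the paper itself gives no proof of this theorem; it quotes the results of Gale, Gr\"unbaum and Perles from \cite{grunbaum}, so the comparison here is against the argument in that book.) Within part (2), your reduction to $k=\lfloor d/2\rfloor$ and your treatment of the case where $P$ is a simplex are also sound: the Radon-type fact that a non-simplex $r$-polytope is at most $\lfloor r/2\rfloor$-neighborly does settle that case, and your observation that the vertex-figure induction is parity-obstructed is accurate.

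However, part (2) for general $P$ is not proved, and two concrete steps fail. First, the auxiliary bound you invoke --- $\dim F+\dim G\le r-1$ for disjoint faces of an $r$-polytope --- is false: opposite facets of the $3$-cube are disjoint and give $2+2>3-1$. (That inequality is just a vertex count valid in a \emph{simplex}; but $Q$ is not known to be a simplex, and for general $P$ it is not one.) So even your conditional lower bound $e\ge d$ in the odd case is unsupported. Second, and more fundamentally, your closing sentence concedes that the upper bound $e\le d$ is not established, and the ``neighborliness engine'' cannot supply it: for a general $P$ (say a $d$-cube) the shared $\lfloor d/2\rfloor$-skeleton carries no neighborliness whatsoever, so the bound says nothing about $Q$. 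Since your vertex-figure induction only shunts even $d$ into the unproved odd case, no instance of general $P$ is actually closed. The missing ingredient is topological, not convex-combinatorial: Gr\"unbaum's proof combines his refinement theorem (the $k$-skeleton of an $e$-polytope contains a refinement of the $k$-skeleton of the $e$-simplex) with the van Kampen--Flores nonembeddability theorem, which for $k=\lfloor d/2\rfloor$ shows that the $k$-skeleton of a simplex of dimension $\ge d+1$ does not embed in $S^{d-1}$, whereas the $k$-skeleton of any $d$-polytope lies in $\partial P\cong S^{d-1}$ (indeed, being a proper subcomplex, in ${\bf R}^{d-1}$). This yields $\dim Q\le d$, and exchanging the roles of $P$ and $Q$ (legitimate once the upper-bound direction is known, since $e<d$ would give $k\ge\lfloor e/2\rfloor$) forces $\dim Q=d$. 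Without some such embeddability obstruction, the statement genuinely fails to follow from the skeletal combinatorics, as your own part (1) illustrates.
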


\begin{theorem}\label{polytope-rec}
Assume $P$ and $Q$ are $d$-polytopes.
\begin{enumerate}
\item If $P$ and $Q$ are $(d-2)$-equivalent, 
      then $P\cong Q$.\label{gen-equiv}
\item If $P$ and $Q$ are simplicial and $P$ and $Q$ are  
      $\lfloor d/2 \rfloor$-equivalent, 
      then $P\cong Q$.\label{simplicial-equiv}
\item If $P$ is a simplex and $P$ and $Q$ are 0-equivalent, 
      then $P\cong Q$.
\end{enumerate}
\end{theorem}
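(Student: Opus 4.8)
The plan is to treat the three parts in order; the first two are the substantial ones. For part (\ref{gen-equiv}) I would induct on $d$, using the stated fact that the vertex--facet incidences determine the face lattice, so that it suffices to recover the facets, as vertex sets, from the $(d-2)$-skeleton. The base case $d=2$ is immediate, since $(d-2)$-equivalence is then $0$-equivalence, i.e.\ equality of the number of vertices, and any two polygons with equally many vertices are combinatorially equivalent. For $d\ge 3$, fix a vertex $v$: the faces of dimension at most $d-2$ through $v$ determine, via the vertex figure, the full $(d-3)$-skeleton of the $(d-1)$-polytope $P/v$, and since $d-3=(d-1)-2$ the inductive hypothesis reconstructs $P/v$, hence the set of facets of $P$ through $v$ (they are the facets of $P/v$). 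It remains to glue these local ``facet germs'' into global facets. I would form a graph whose nodes are the pairs $(v,\bar F)$ with $\bar F$ a facet of $P/v$, joining two nodes at the endpoints of an edge $vw$ of $P$ exactly when $\bar F$ and the corresponding germ at $w$ induce the \emph{same} facet of the edge figure $P/\{v,w\}$ (computed inside $P/v$ as the vertex figure at $vw/v$, and symmetrically inside $P/w$). Since the facets of $P$ containing $vw$ are in bijection with the facets of the $(d-2)$-polytope $P/\{v,w\}$, this rule never merges germs of two distinct facets, while the connectedness of the graph of each facet (a polytope of dimension $\ge 1$) forces all germs of a single facet to be merged; the connected components are then exactly the facets, recovered as their vertex sets. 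As the whole construction reads only the $(d-2)$-skeleton, $(d-2)$-equivalent polytopes receive the same face lattice. I expect the gluing to be the main obstacle: a ridge lies in precisely two facets, so germs must \emph{not} be identified merely because they share a ridge, and routing the identification through the edge figures (where the bijection above holds) is exactly what prevents false merges; verifying that bijection and the connectedness used in the merge is the heart of part (\ref{gen-equiv}).

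For part (\ref{simplicial-equiv}) the boundary of a simplicial $d$-polytope is a simplicial $(d-1)$-sphere, and combinatorial equivalence of simplicial polytopes is just equality of these boundary complexes; so again it suffices to recover all faces, equivalently the facets, from the $\lfloor d/2\rfloor$-skeleton. I would try to raise the skeleton one dimension at a time, from $k=\lfloor d/2\rfloor$ up to $d-2$. The candidates for $(k+1)$-faces are the ``boundary-complete'' sets $\tau$ of $k+2$ vertices all of whose $(k+1)$-subsets are already known $k$-faces, and the entire difficulty is to decide which candidates are genuine faces and which are empty simplices; boundary-completeness alone cannot decide this, since empty simplices of this size do occur in simplicial spheres. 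The resolution uses the sphere/manifold structure: the filling of $\tau$ is detected by a (co)homological condition in the link of a suitable subface, which Poincar\'e--Lefschetz duality for the $(d-1)$-sphere converts into a condition visible in the known skeleton. This is precisely the reconstruction of a combinatorial manifold from its $\lceil (d-1)/2\rceil$-skeleton, and the point is that $\lceil (d-1)/2\rceil=\lfloor d/2\rfloor$ is the tight threshold. I expect this filling criterion to be the deepest step, and the genuinely hard case to be $d$ odd: then the boundary sphere has even dimension $n=d-1$ and the critical simplices sit in the middle dimension $n/2$, where passing to a vertex link yields its skeleton only up to dimension one short of the link's own threshold, so Poincar\'e duality, rather than mere dimension-shifting induction, is what is needed.

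Part 3 is immediate. Here $0$-equivalence means only that $P$ and $Q$ have the same number of vertices, and a $d$-simplex has $d+1$ of them, so $Q$ is a $d$-polytope with exactly $d+1$ vertices. Since any $d$-polytope has at least $d+1$ vertices and equality forces those vertices to be affinely independent, $Q$ is the convex hull of $d+1$ affinely independent points, i.e.\ a $d$-simplex, whence $Q\cong P$.
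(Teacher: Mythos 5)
The paper itself does not prove this theorem; it cites it (Gale, Gr\"unbaum, Perles) via \cite{grunbaum}, so your attempt must be judged against the classical arguments. Part 3 of your proposal is complete and is exactly the one-line observation the paper alludes to. Part 1, however, has a genuine gap at the base of the induction. To form your germs $(v,\bar F)$ you need more than the literal inductive statement that $(d-3)$-equivalent $(d-1)$-polytopes are abstractly isomorphic: you need the facets of $P/v$ to be \emph{canonically} recoverable as concrete subsets of the known $(d-3)$-skeleton, i.e.\ the strengthened statement that every isomorphism of $(d-2)$-skeleta extends (uniquely, since a facet is determined by its ridges) to the face lattices. That strengthened statement is false at $d=2$: the $0$-skeleton of an $n$-gon is a bare $n$-element set and carries no information about which pairs of vertices span edges, so although any two $n$-gons are isomorphic, their edges are not determined by the skeleton. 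Consequently your step from $d=2$ to $d=3$ collapses: at $d=3$ the vertex figures are polygons known only through their $0$-skeleta, the ``facets of $P/v$'' (pairs of edges at $v$ spanning a $2$-face) are not determined by the graph, and the germ graph cannot even be defined. The case $d=3$ is precisely the Steinitz--Whitney theorem \cite{steinitz,whitney}, as the paper notes, and must be imported as the true base case (Whitney: a $3$-connected planar graph has a unique embedding, so the facial cycles are determined). With that base your gluing step looks sound for $d\ge 4$; routing the identification through edge figures rather than ridges is the right move, and comparing the two copies of $P/vw$ is licensed by the unique-extension form of the hypothesis at dimension $d-2$ (for $d=4$, an edge of a polygon is determined by its endpoints).

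In Part 2 the frame (fill in faces one dimension at a time, recognize empty simplices) is reasonable, but the filling criterion is the entire content of Perles's theorem and you assert it rather than supply it. Moreover, your appeal to manifold reconstruction is off by one in exactly the case you flag as hard: Dancis \cite{dancis} determines a triangulated $n$-manifold from its $(\lfloor n/2\rfloor+1)$-skeleton, which for the boundary $(d-1)$-sphere equals $\lfloor d/2\rfloor$ only when $d$ is even; for odd $d$ it gives $\lfloor d/2\rfloor+1$, one dimension more than you are given, so there is no manifold theorem at your claimed threshold $\lceil (d-1)/2\rceil$ to invoke. The $\lfloor d/2\rfloor$ bound is established for \emph{polytopes} (Perles, see \cite{grunbaum}), not for arbitrary triangulated spheres, so you would either need to reproduce an argument that genuinely uses polytopality, or actually prove the homological filling criterion for spheres at the middle dimension that your Poincar\'e-duality remark gestures at; the sketch does neither, and this unproven step is the heart of the result. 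Note also that the paper's bipyramid example shows simpliciality of both $P$ and $Q$ is essential, so any completed argument must visibly use that $Q$ is simplicial, as your candidate-set formulation implicitly does.
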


The last statement is, of course, a pedantic way of saying that
$d$-simplices are the only convex $d$-polytopes with exactly $d+1$ vertices.
Part~\ref{gen-equiv}  says that any $d$-polytope (or, more
precisely, its face lattice) can be reconstructed 
from its $(d-2)$-skeleton. 
In particular, for 
$d=3$, the graph of a 3-polytope determines the
face lattice;  this is a consequence of much earlier results by 
Steinitz \cite{steinitz} and Whitney \cite{whitney}. 
Dancis \cite{dancis} proves a result analogous to
Part~\ref{simplicial-equiv} for triangulated manifolds.
For simplicial $d$-polytopes, the face lattice can be reconstructed from
the incidences of $i$-faces and $(i+1)$-faces, if 
$\lfloor d/2\rfloor < i \le d-2$ (\cite[Theorem 19.5.25]{kalai-skel}).

Note in Part~\ref{simplicial-equiv} of Theorem~\ref{polytope-rec}, 
we need to assume both $P$ and $Q$ are simplicial.
The following 5-polytopes are 2-equivalent: 
the bipyramid over the 4-simplex, and 
the pyramid over the bipyramid over the 3-simplex.
The first polytope is simplicial, while the second is ``quasisimplicial''---all
its facets are themselves simplicial polytopes.

A much stronger (and surprising) result holds for simple polytopes, the
duals of simplicial polytopes.

\begin{theorem}[Blind and Mani \cite{MR89b:52008} and Kalai \cite{kalai}]
If $P$ and $Q$ are 1-equivalent simple $d$-polytopes, then $P$ and $Q$ are
combinatorially equivalent.
\end{theorem}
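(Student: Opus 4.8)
The plan is to show that the entire face lattice of a simple $d$-polytope is an invariant of its graph $G(P)$; since $1$-equivalence supplies a graph isomorphism $G(P)\cong G(Q)$, and since simplicity forces $G(P)$ to be $d$-regular so that the dimension is itself read off from the graph, applying one and the same graph-to-lattice procedure on both sides will then produce isomorphic face lattices. The reconstruction I have in mind is Kalai's, built on acyclic orientations of $G(P)$. I would call an acyclic orientation $\mathcal{O}$ \emph{good} if every nonempty face of $P$ induces a subgraph with a unique sink. Good orientations certainly exist, since a generic linear functional $\phi$ orients each edge toward its larger endpoint and attains a strict maximum on each face; the real point, however, is that goodness must ultimately be detected from $G(P)$ alone, with no reference to the geometry.

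The engine of the argument is a counting identity that exploits simplicity. At a vertex $v$ the $d$ incident edges correspond to the $d$ facets through $v$, and every $j$-subset of these edges spans a unique $j$-face containing $v$; consequently the faces in which $v$ is a local sink are in bijection with the subsets of the edges directed \emph{into} $v$. Writing $h_v$ for the in-degree of $v$ under $\mathcal{O}$, this gives the graph-computable quantity
\[
  f^{\mathcal{O}} \;=\; \sum_{v} 2^{\,h_v}
  \;=\; \#\bigl\{(F,v): v \mbox{ is a local sink of } F\bigr\}.
\]
Because acyclicity forces every face to possess at least one local sink, $f^{\mathcal{O}}$ is bounded below by the total number of nonempty faces, with equality precisely when each face has a \emph{unique} local sink, that is, precisely when $\mathcal{O}$ is good. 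This is the heart of the matter: $f^{\mathcal{O}}$ depends only on the abstract graph, so the good orientations are recognizable as exactly those acyclic orientations minimizing $f^{\mathcal{O}}$, and the minimum value is the face count itself. Refining the bookkeeping, $\sum_v {h_v \choose j}$ then recovers the number of $j$-faces from any good orientation.

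It remains to upgrade these numerical invariants to the face lattice. Here I would exploit the observation that a good orientation of $P$ restricts on any face $F$ to an orientation under which every face of $F$ still has a unique sink, i.e.\ to a good orientation of the subpolytope $F$. This permits a characterization of faces among induced subgraphs: a vertex set $W$ spans a face exactly when $G[W]$ is connected and $k$-regular for some $k$ and the restriction of every good orientation of $G(P)$ to $W$ again has a unique sink. Since the family of good orientations, the regular connected induced subgraphs, and this restriction test are all phrased purely in terms of $G(P)$, the resulting collection of face-subsets, partially ordered by inclusion, is a graph invariant isomorphic to the face lattice of $P$; matching it across the isomorphism $G(P)\cong G(Q)$ yields $P\cong Q$.

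I expect the main obstacle to be the minimization principle, together with the converse half of the face characterization in the final step. Proving that $f^{\mathcal{O}}$ is minimized exactly at the good orientations is what renders goodness visible from the graph, and it rests essentially on the simple-polytope fact that every subset of edges at a vertex spans a genuine face. Correspondingly, verifying that the regular, connected, sink-respecting induced subgraphs are \emph{only} the faces, and not spurious lookalikes, must use the unique-sink structure in a genuine way rather than mere face-counting. By contrast the existence of good orientations via generic linear functionals, although geometric, is entirely routine.
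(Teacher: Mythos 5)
The paper itself gives no proof of this theorem---it is a survey statement citing Blind--Mani and Kalai---so the right benchmark is Kalai's proof, which is clearly what you are reconstructing. Your first half is a faithful and correct rendition of it: the identity $f^{\mathcal{O}}=\sum_v 2^{h_v}$ counting pairs $(F,v)$ with $v$ a local sink of $F$ (valid because, by simplicity, faces containing $v$ biject with subsets of the edges at $v$), the lower bound by the total face count via acyclicity, and the conclusion that the good orientations are exactly the minimizers of the graph-computable quantity $f^{\mathcal{O}}$. That lemma is the heart of Kalai's argument and you have it right, including the refinement $\sum_v \binom{h_v}{j}$ for the $f$-vector.

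The gap is in your final step, and it is genuine. Kalai's criterion for face recognition is \emph{not} the one you state. He characterizes the vertex sets of nonempty faces as those $W$ such that $G[W]$ is induced, connected, $k$-regular for some $k$, \emph{and $W$ is an initial (equivalently, final) set of some good orientation}, i.e.\ no edge is directed from outside $W$ into $W$. The initial-set condition is what makes the converse provable: if $v$ is a sink of $G[W]$, the $k$ in-edges at $v$ inside $W$ span a $k$-face $F$, of which $v$ is a local sink and hence (goodness) the unique sink; every vertex of $F$ has a directed path in $F$ to $v$, and such a path cannot cross into the initial set $W$ from outside, so $F\subseteq W$, and then $k$-regularity plus connectedness of $G[W]$ forces $F=W$. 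Your substitute condition---that the restriction of \emph{every} good orientation to $W$ has a unique sink---is necessary (faces inherit good orientations, as you note), but you give no argument for sufficiency, and none of the machinery you have set up yields one: without initiality there is no way to trap the face $F$ inside $W$. This is not a cosmetic omission. The failure of the Perles conjecture (Haase--Ziegler, discussed in Section~6 of this paper) shows that connected, regular, induced, even nonseparating subgraphs of simple polytopes need not be faces, so any recognition criterion lives or dies by the extra condition, and the sufficiency of your sink-counting test is an open claim in your write-up---you flag it as ``the main obstacle'' but leave it unproven, whereas Kalai's initial-set formulation disposes of it in a few lines. To repair the proof, replace your test by Kalai's and supply the directed-path argument above.
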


That is, the face lattice of a simple polytope is determined by its graph.
Note that the theorem assumes $P$ and $Q$ are of the same dimension.
The graph of a simple $d$-polytope may also be the graph of a nonsimple, lower
dimensional polytope.  For example, the graph of the simplex is also the graph
of lower dimensional cyclic polytopes; the graph of the cube is also the
graph of lower dimensional ``neighborly cubical polytopes''.
The dual statement of the theorem is that the face lattice of a simplicial 
polytope can be reconstructed from its facet-ridge graph.

Kalai's proof gives a method for constructing the face lattice from the graph,
but the complexity is exponential in the 
number of vertices---the algorithm uses all acyclic orientations of the graph.
It is straightforward, however, to construct the face lattice of a simple 
polytope from its 2-skeleton.  (See \cite{kalai}.)
Murty \cite{murty} shows that the 2-skeleton is enough to determine the
face poset of a broader class of objects, 
``simple'' {\em abstract polytopes}.
(An abstract or {\em incidence polytope}, introduced by Danzer and
Schulte \cite{danzer-schulte}, is a strongly flag-connected graded poset with
the diamond property.  Earlier, Adler \cite{adler} had given a similar 
definition, but with a further condition that each
vertex is contained in exactly $d$ facets.)
Joswig \cite{joswig-reconst} shows that the face lattice of any polytope $P$
can be
reconstructed from its graph along with the following additional information:
for each
vertex $v$ the sets of edges that are precisely the edges containing $v$ and
contained in a facet of $P$.

Various papers consider algorithmic issues of constructing the face
lattice of a simple polytope from its graph 
(\cite{achatz,joswig-ksystems,kaibel}; 
Friedman \cite{friedman} gives a polynomial time algorithm for finding the 
facets of the simple polytope from the graph.

Results of Perles show the limitations of polytope reconstruction
theorems, even for polytopes with few vertices: there are many more
combinatorial types of $d$-polytopes with $d+3$ vertices than combinatorial
types of $k$-skeleta of these polytopes.
\begin{theorem}[Perles, see \cite{grunbaum}]
The number of combinatorial types of $d$-polytopes with $d+3$ vertices 
is bounded below by an exponential function of $d$.  
\end{theorem}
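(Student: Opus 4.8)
The plan is to argue via Gale diagrams. A $d$-polytope with $n=d+3$ vertices has a Gale diagram consisting of $n$ vectors in $\mathbf{R}^{\,n-d-1}=\mathbf{R}^2$, so the whole problem lives in the plane. After rescaling each nonzero vector to unit length — which does not change the combinatorial type — I may take the diagram to be a multiset of $d+3$ points on the circle $S^1$. The two governing facts I will use are standard. Such a multiset is the Gale diagram of a $d$-polytope with $d+3$ vertices exactly when every open halfplane bounded by a line through the origin contains at least two of the points; and a subset $F$ of the vertices is a face precisely when $0$ lies in the relative interior of $\conv\{\bar v : v\notin F\}$, so two diagrams yield combinatorially equivalent polytopes exactly when these ``coface'' families (equivalently, their oriented matroids) are isomorphic. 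Thus it suffices to exhibit exponentially many planar diagrams that are pairwise inequivalent.

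First I would fix the underlying positions and vary only multiplicities. Choose an even integer $2m$ with $m\ge 2$ and $4m\le d+3$, place points only at the $2m$ vertices of a regular $2m$-gon inscribed in $S^1$, and assign to the $j$-th vertex a multiplicity $a_j\ge 2$ with $\sum_{j=1}^{2m} a_j = d+3$. Validity is immediate: since $m\ge 2$ the vertices do not all lie on one line, and as they occur in antipodal pairs every open halfplane through the origin contains at least one of them, hence at least $2$ points. Counting admissible multiplicity vectors is then a routine composition count: writing $a_j=b_j+2$ reduces the problem to distributing $d+3-4m$ among $2m$ nonnegative parts, which gives ${d+1-k \choose k}$ diagrams with $k=2m-1$. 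Optimizing $k$ within the allowed range makes this quantity grow like $\phi^{\,d}$ up to a polynomial factor, where $\phi=(1+\sqrt{5})/2$ (this is the maximal term of $\sum_k {d+1-k \choose k}=F_{d+2}$); in particular it is exponential in $d$.

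It remains to pass from labeled multiplicity vectors to combinatorial types. Because any two of my diagrams share the same underlying point set, any isomorphism of their coface families is realized by a symmetry of that common set, i.e.\ by the dihedral group $D_{2m}$ acting on the $2m$ positions (rotations and reflections), of order $4m\le 2(d+3)$. Hence each combinatorial type is represented by at most $4m$ of my multiplicity vectors, and dividing the count ${d+1-k \choose k}$ by this factor — which is only linear in $d$ — still leaves exponentially many distinct $d$-polytopes with $d+3$ vertices.

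The step I expect to be the real obstacle is the inequivalence claim packaged into the last paragraph: I must verify that combinatorially equivalent polytopes in this family correspond to cyclic multiplicity sequences agreeing up to $D_{2m}$, and nothing coarser. This rests on the Gale-diagram classification of polytopes with $d+3$ vertices — that combinatorial equivalence of the polytopes matches oriented-matroid isomorphism of the planar diagrams, and that for points pinned to a regular $2m$-gon an oriented-matroid isomorphism can only permute positions by a dihedral symmetry. Checking carefully that the multiplicity at each position is genuinely detected by the coface structure, so that no two dihedral orbits accidentally collapse to the same polytope, is where the argument needs the most care; everything else is bookkeeping.
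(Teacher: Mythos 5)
The paper itself contains no proof of this statement: it is quoted from Gr\"unbaum's book, where Perles's argument runs through exactly the planar Gale diagrams you set up. So your proposal should be judged against that classical source, and in outline it reproduces it faithfully: a $d$-polytope with $d+3$ vertices has a reduced Gale diagram of $d+3$ points on $S^1$, validity is the ``at least two points in every open halfplane'' criterion, the family is an exponential composition count, and one divides by a symmetry factor. Your validity check (antipodal classes, each of multiplicity at least $2$), the count ${d+1-k \choose k}$ with $k=2m-1$, and the final division by $4m\le 2(d+3)$ are all correct; fixing a single optimal $m$ also sidesteps any collision between diagrams with different numbers of occupied positions.

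The genuine gap is the one you flag yourself, and the one sentence you offer for it is a non sequitur: from the fact that two diagrams ``share the same underlying point set'' it does not follow that an isomorphism of their coface families ``is realized by a symmetry of that common set.'' A combinatorial equivalence of the polytopes is merely an abstract bijection of two $(d+3)$-element vertex sets preserving faces; nothing a priori makes it a rigid motion of the plane, and combinatorial equivalence is in general even weaker than oriented-matroid isomorphism (points of a Gale diagram can be moved without changing the cofaces), so appealing to oriented matroids does not by itself close the hole. What saves you is that the lemma can be proved directly from minimal cofaces, i.e.\ positive circuits. Since every $a_j\ge 2$, the $2$-element cofaces are exactly the antipodal pairs, so any coface-preserving bijection carries the $2m$ coincidence classes to classes, preserving multiplicities and the pairing of antipodal classes. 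The minimal $3$-element cofaces (triples of pairwise non-antipodal classes not contained in a closed halfplane) then encode the circular separation structure of the $2m$ classes, and a permutation of ${\bf Z}_{2m}$ commuting with the antipodal involution and preserving these triples must preserve or reverse the cyclic order, hence lies in the dihedral group of order $4m$. (For $m=2$ there are no spanning triples, but the antipodality constraints alone already confine the permutations to the order-$8$ dihedral group, so the bound $4m$ still holds.) With that lemma supplied your argument is complete, and it is essentially Perles's; without it, the decisive step --- that distinct dihedral orbits of multiplicity vectors give combinatorially distinct polytopes --- is asserted rather than proved.
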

\begin{theorem}[Perles, see Part II of \cite{Wotzlaw-diss}, also 
\cite{kalai-nato}]
For fixed $k$ and $b$, the number of combinatorial types of $k$-skeleta
of $d$-polytopes with $d+b+1$ vertices is bounded above by a constant
independent of dimension $d$.
\end{theorem}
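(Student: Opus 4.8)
The plan is to pass to Gale diagrams, where the dimension of the ambient configuration is the \emph{fixed} number $b$ rather than the growing dimension $d$. Recall that a $d$-polytope $P$ with $n=d+b+1$ vertices $v_1,\dots,v_n$ has a Gale transform $\bar v_1,\dots,\bar v_n\in\mathbf{R}^{b}$ (a basis for the $b$-dimensional space of affine dependences among the $v_i$), and that a subset $S\subseteq[n]$ is the vertex set of a proper face of $P$ if and only if $0\in\mbox{relint }\conv\{\bar v_j : j\notin S\}$. Thus the entire combinatorics of $P$ is encoded by $n$ vectors in a space of bounded dimension; the difficulty is that $n$ still grows with $d$, so the task is to isolate the bounded part of this data that the $k$-skeleton actually sees.

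The first step, and the key structural observation, is that \emph{faces in the $k$-skeleton have boundedly many vertices}. Concretely, I claim every $j$-face of $P$ has at most $j+1+b$ vertices. Indeed, if a $j$-face has vertex set $T$ with $|T|=v$, the affine dependences supported on $T$ form a subspace of the total, $b$-dimensional, space of affine dependences of $P$; since $v$ points spanning a $j$-flat carry a $(v-j-1)$-dimensional space of affine dependences, we get $v-j-1\le b$. Hence for $j\le k$ every face has at most $s:=k+1+b$ vertices, so the $k$-skeleton is completely determined by recording, for each subset $S$ with $|S|\le s$, whether $S$ is a face together with its dimension, and by the inclusions among these sets.

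Next I would make the face criterion \emph{local}. For a set $S$ with $|S|\le s$, the condition $0\in\mbox{relint }\conv\{\bar v_j:j\notin S\}$ fails exactly when some open halfspace through the origin contains only Gale vectors indexed by $S$; equivalently, $S$ fails to be a face precisely when it contains the full content of some such halfspace. Since $|S|\le s$, only \emph{thin} halfspaces (open halfspaces through $0$ holding at most $s$ Gale vectors) are relevant, and a Gale vector lying in no thin halfspace is \emph{freely joinable}: it may be adjoined to any small face, so it behaves like an apex vertex of an iterated pyramid, and its presence matters only up to the threshold $s$.

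The hard part will be to bound, independently of $d$, the combinatorial complexity of this ``thin'' part of the diagram. The plan is to declare two vertices equivalent when interchanging their Gale vectors preserves membership and dimension for every subset of size at most $s$, and then to prove that (i) the number of equivalence classes is bounded by a function of $k$ and $b$ alone, and (ii) the $k$-skeleton is determined up to isomorphism by the incidence pattern among these classes together with each class size \emph{capped} at $s$. Granting (i) and (ii), the number of possible patterns, and hence of combinatorial types of $k$-skeleta, is bounded by a constant $C(k,b)$ with no dependence on $d$. Step (i) is the crux: it amounts to showing that, once the ambient dimension $b$ is fixed, only boundedly many Gale vectors (up to the equivalence above and up to multiplicity capped at $s$) can participate in thin halfspaces, which is the ``standard Gale diagram'' reduction behind Perles's theorem \cite{Wotzlaw-diss}. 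The capping in (ii) is exactly what decouples the bound from the ever-growing number of freely joinable vertices, and it is the reason the skeleton can stay bounded even though, by the preceding theorem, the number of polytopes themselves grows exponentially in $d$.
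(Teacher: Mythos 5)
The paper itself gives no proof of this theorem: it is stated as Perles's result with the proofs residing in Part II of Wotzlaw's dissertation \cite{Wotzlaw-diss} and in Kalai's survey \cite{kalai-nato}, so your attempt can only be measured against those standard treatments. Your opening is sound and does match the standard first steps: passing to the Gale transform in ${\bf R}^b$, the coface criterion ($S$ is the vertex set of a face if and only if the origin lies in the relative interior of $\conv\{\bar v_j : j\notin S\}$), and in particular your dependence-counting lemma that a $j$-face has at most $j+1+b$ vertices is correct and is genuinely the right structural observation --- it confines the entire $k$-skeleton to subsets of size at most $s=k+1+b$ and makes the face criterion a question about open halfspaces through the origin with small content.

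The genuine gap is that steps (i) and (ii) --- which you introduce with ``The plan is to \ldots prove'' and then dispatch with ``Granting (i) and (ii)'' --- are not peripheral bookkeeping: they \emph{are} Perles's Skeleton Theorem, restated in Gale language. Claim (i), that the number of swap-equivalence classes is bounded by a function of $k$ and $b$ alone, is exactly the assertion that the ``thin part'' of an arbitrarily large configuration in ${\bf R}^b$ has bounded combinatorial complexity; you offer no argument for it, and your appeal to ``the standard Gale diagram reduction behind Perles's theorem'' is circular, since that reduction is precisely what you were asked to establish. In Wotzlaw's treatment the corresponding crux is a substantive lemma bounding, in terms of $k$ and $b$ only, the number of empty pyramids (missing faces of bounded dimension) after quotienting out pyramid apexes, and nothing in your sketch substitutes for that work. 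There are also smaller leaks you would need to patch even granting (i): your thresholds are off by one (testing $S\cup\{i\}$ with $|S|\le s$ makes halfspaces holding up to $s+1$ Gale vectors relevant, so ``thin'' must be defined with a larger cutoff, say $2s$); the relative-interior criterion needs care when the complement of $S$ fails to span; and in (ii) you must actually prove that, for fixed vertex number $n=d+b+1$, equality of the capped class patterns forces an isomorphism of $k$-skeleta even when the uncapped class sizes are distributed differently among the classes. As it stands, this is a plausible and well-oriented plan whose central lemma is assumed rather than proved.
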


\section{Nearly simple polytopes}

Recent papers by Doolittle, et al.\ \cite{doolittle} 
and by Pineda-Villavicencio, et al.\ \cite{pineda-v}
consider the possibility of extending the 
result on simple polytopes to polytopes that have few nonsimple vertices
(vertices of degree greater than the dimension of the polytope).

\begin{theorem}[\cite{doolittle,pineda-v}]\label{nearly-simple}

\mbox{}

\begin{enumerate}\label{doo-nevo}
\item The face lattice of any $d$-polytope with at most two nonsimple
      vertices is determined by its graph. \label{ns-pt1}
\item If $d\ge 5$ and $n\le 2d$, then the face lattice of any $d$-polytope
      with $n$ vertices, at most $2d-n+3$ of them nonsimple, is determined
      by its graph.
\item The face lattice of any $d$-polytope with at most $d-2$ nonsimple
      vertices is determined by its 2-skeleton.
\item For every $d\ge 4$, there are two combinatorially nonequivalent $d$-polytopes
      with $d-1$ nonsimple vertices and isomorphic $(d-3)$-skeleton.
      \label{noniso}
\end{enumerate}
\end{theorem}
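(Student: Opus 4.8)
The plan is to reduce every reconstruction statement, parts~(1)--(3), to the single problem of recovering the vertex sets of the facets of $P$ from the given data, since the vertex--facet incidences determine the entire face lattice. The engine throughout is the Blind--Mani--Kalai machinery of \emph{good} (abstract objective function) acyclic orientations: for a simple $d$-polytope the good orientations are intrinsically recognizable from the graph as the minimizers of $\sum_v 2^{\mathrm{indeg}(v)}$, and every face arises as an initial set of these orientations, so the whole lattice is recoverable. The first step is therefore to isolate how this recognition degrades in the presence of nonsimple vertices: around a \emph{simple} vertex $v$ the $d$ incident edges and the $d$ facets through $v$ are in forced bijection with the $(d-1)$-subsets of those edges, so the local face structure is rigid, whereas around a nonsimple vertex only some edge-subsets bound faces, and this is the sole source of ambiguity.

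For parts~(1) and~(2) I would show that this local ambiguity, being confined to a bounded number of vertices, is resolved by global consistency. Concretely, every facet that misses the nonsimple vertices (or meets them in a configuration that is still locally simple within the facet) is recovered verbatim by Kalai's criterion; what remains is to decide, at each nonsimple vertex, which edge-subsets extend to genuine facets. I would argue that two polytopes $P,Q$ with the same graph and the permitted number of nonsimple vertices induce the \emph{same} collection of good orientations, and hence agree on all facets avoiding the bad vertices; the facets through the bad vertices are then pinned down by counting incidences and by the requirement that the putative facet sets close up into a valid polytopal, intersection-closed, Eulerian lattice. The numerical thresholds---at most two nonsimple vertices in general, and at most $2d-n+3$ when $n\le 2d$---are precisely the ranges in which this closing-up admits a unique solution; for part~(2) the hypothesis $n\le 2d$ lets one control the excess degree $\sum_v(\deg v - d)$ and thereby bound how the ambiguous facets can interact. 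The \textbf{main obstacle} is exactly this uniqueness: proving that two distinct completions at the nonsimple vertices cannot both yield the face lattice of a polytope on the given graph. I expect this to require a careful analysis of the vertex figures at the nonsimple vertices together with a rigidity or parity invariant, and it is here that the bounds are genuinely used rather than merely convenient.

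Part~(3) follows the same outline but with the 2-skeleton in place of the graph, which supplies the 2-faces directly. Knowing which pairs of edges at a vertex span a 2-face sharply constrains the admissible edge-subsets at a nonsimple vertex, restoring enough local rigidity that the global closing-up argument tolerates up to $d-2$ nonsimple vertices; I would carry out the same facet-recovery, now feeding the known 2-faces into the consistency conditions to eliminate the spurious completions. The trade-off between the allowed number of nonsimple vertices and the dimension of the skeleton used is the recurring theme, and the $d-2$ bound should emerge from the dimension count governing how many independent 2-face constraints meet at a single vertex figure.

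For the sharpness statement, part~(\ref{noniso}), the plan is to exhibit, for each $d\ge 4$, an explicit pair of combinatorially inequivalent $d$-polytopes $P_d,Q_d$ that share a $(d-3)$-skeleton and each carry exactly $d-1$ nonsimple vertices. By Theorem~\ref{polytope-rec}(\ref{gen-equiv}) no such pair can agree on the $(d-2)$-skeleton, so $d-3$ is the largest skeleton one can hope to match, and the construction shows this optimum is attained in the nearly simple regime. I would build the family by iterated pyramids (equivalently, joins with simplices) over a small inequivalent pair already known to share a low skeleton---starting, for instance, from the kind of $2$-equivalent $5$-polytopes noted just before the Blind--Mani--Kalai theorem---because the join face formula lets one compute the common $(d-3)$-skeleton mechanically and track exactly which vertices stay nonsimple: base vertices of degree $d-1$ remain simple, while each apex is simple only if its base has the right number of vertices, so the apexes can be tuned to contribute precisely the required count. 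Nonequivalence would be certified by a single facet that differs between the two members of the pair. The \textbf{delicate point} here is simultaneity: arranging that the $(d-3)$-skeleta coincide, that the polytopes remain inequivalent, and that the nonsimple vertex count lands at exactly $d-1$, all at once and uniformly in $d$. I expect the vertex-count bookkeeping for the apexes to be the fussiest part, and the place where the specific base pair must be chosen with care.
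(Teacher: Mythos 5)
Your plan for Part~\ref{noniso} rests on a mechanism that cannot work: iterated pyramids (joins with simplices) over a fixed low-dimensional inequivalent pair. The $k$-faces of a pyramid $\mathrm{pyr}(P)$ are the $k$-faces of $P$ together with pyramids over the $(k-1)$-faces of $P$, so pyramiding \emph{preserves} the level of skeletal agreement of a pair but can never raise it --- in particular, if $P$ and $Q$ are $k$-equivalent $d$-polytopes with $f_{k+1}(P)\neq f_{k+1}(Q)$, then $f_{k+1}(\mathrm{pyr}\,P)=f_{k+1}(P)+f_k(P)\neq f_{k+1}(Q)+f_k(Q)=f_{k+1}(\mathrm{pyr}\,Q)$, so the pyramids are again only $k$-equivalent. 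Meanwhile the target level $d-3$ rises by one with every pyramid, so starting from any fixed base pair the agreement falls one dimension further short at each step. Concretely, for the base pair you name: the bipyramid over the $4$-simplex has $f$-vector $(7,20,30,25,10)$ and the pyramid over the bipyramid over the $3$-simplex has $(7,20,30,24,9)$; their pyramids are $6$-polytopes with $f_3=55$ and $f_3=54$ respectively, so they are not $3$-equivalent, which is what $d-3$ demands at $d=6$. (That pair also has $5=d$, not $d-1=4$, nonsimple vertices: in the bipyramid over the $4$-simplex every base vertex has degree $6$.) The construction in \cite{doolittle}, as described in this survey, is of a different nature: a $d$-polytope with a facet that is a bipyramid (for $d=4$, $f$-vector $(8,19,18,7)$ with a facet the bipyramid over a triangle), where \emph{splitting that one facet into two simplices} yields an inequivalent polytope while altering the face lattice only in dimensions $d-2$ and $d-1$; this leaves the $(d-3)$-skeleton, and in particular the graph and hence the count of nonsimple vertices, literally unchanged, with no bookkeeping needed.

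For Parts 1--3 there is less to compare, since this survey cites the proofs to \cite{doolittle,pineda-v} rather than giving them; your orientation-based plan does match the first of the two approaches the survey reports for Part~\ref{ns-pt1} (Kalai's acyclic orientations combined with the frames of Joswig, Kaibel and K\"orner), and you might note the second reported approach, via truncation of the nonsimple vertices, as an alternative. But as written your argument has an acknowledged hole exactly where the theorem lives: you assert without justification that the facets avoiding nonsimple vertices are ``recovered verbatim by Kalai's criterion'' --- the characterization of good orientations as minimizers of $\sum_v 2^{\mathrm{indeg}(v)}$ is proved only for simple polytopes, and recognizing good orientations from the graph in the presence of nonsimple vertices is precisely the difficulty --- and you defer the uniqueness of the ``closing-up'' at the nonsimple vertices, which is where the bounds (two nonsimple vertices; $2d-n+3$ when $n\le 2d$; $d-2$ with the $2$-skeleton) must actually be used. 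So Parts 1--3 stand as a plausible programme aligned with the literature rather than a proof, and Part~\ref{noniso} needs to be replaced by a facet-splitting construction of the kind just described.
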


In terms of $k$-equivalence, this says:
\begin{quote}
If a $d$-polytope $P$ has at most two nonsimple vertices, and a $d$-polytope
$Q$ is 1-equivalent to $P$, then $P\cong Q$.\\
If a $d$-polytope $P$ has at most $d-2$ nonsimple vertices, and a $d$-polytope
$Q$ is 2-equivalent to $P$, then $P\cong Q$.\\
There are $(d-3)$-equivalent $d$-polytopes with $d-1$ nonsimple
vertices that are not combinatorially equivalent.
\end{quote}

The paper \cite{doolittle} presents two proofs of Part~\ref{ns-pt1} of the 
theorem.  One uses the acyclic orientation approach of  Kalai's proof 
\cite{kalai} of the simple case and the ``frames'' of Joswig, Kaibel and 
K\"{o}rner \cite{joswig-ksystems}.  (See also \cite{friedman,kaibel}.)
The other uses acyclic orientations along with
truncation of the nonsimple vertices.

The construction in \cite{doolittle}
of Part~\ref{noniso} of Theorem~\ref{doo-nevo} 
gives a 4-polytope with $f$-vector $(8, 19, 18, 7)$, one of whose facets is 
the bipyramid over a triangle.  Splitting this facet into two simplices
gives a complex combinatorially equivalent to a 4-polytope with 
$f$-vector $(8, 19, 19, 8)$.  The construction does not change the 
graph and therefore does not change the number (three) of nonsimple
vertices.

Pineda-Villavicencio et al.\ \cite{pineda-v,pineda-excess} also consider another
measure of deviation from simple.  The {\em excess degree} of a $d$-polytope
is $2f_1-df_0$, the sum of the number of extra edges on all the vertices.
They study Minkowski decomposability and prove some structural properties of
polytopes with small excess.  On the issue of reconstruction,
they prove the following theorem.

\begin{theorem}[\cite{pineda-v}]
The face lattice of any $d$-polytope with excess degree at most $d-1$ is
determined by its graph.
\end{theorem}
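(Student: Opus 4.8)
The plan is to combine two ingredients: the excess-degree gap theorem of \cite{pineda-excess}, and an adaptation of Kalai's acyclic-orientation reconstruction \cite{kalai} to polytopes that are simple away from a small, controlled set of vertices. First I would recover the dimension and locate the non-simple behaviour directly from the graph $G$. Every vertex of a $d$-polytope has degree at least $d$, with equality exactly at the simple vertices; since the excess $2f_1-df_0$ is at most $d-1$, which is strictly smaller than the number $f_0\ge d+1$ of vertices, at least one vertex is simple and hence $d$ is the minimum degree in $G$. With $d$ in hand I can read off the excess and the set $N$ of non-simple vertices (those of degree greater than $d$). By the gap theorem the excess is either $0$ or at least $d-2$, so the hypothesis forces it into $\{0,d-2,d-1\}$. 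When the excess is $0$ the polytope is simple and Blind--Mani--Kalai finishes the argument; the content lies in the two remaining values.

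The decisive local fact is that in a simple polytope \emph{every} subset of the edges at a vertex spans a face, since the faces through a vertex form a Boolean lattice; this is exactly what fails at a non-simple vertex $v$, where those faces instead form the face lattice of the vertex figure $P/v$, a non-simplex $(d-1)$-polytope on $\deg(v)=d+a_v$ vertices, with $a_v\ge 1$ and $\sum_{v\in N}a_v\le d-1$. Thus each such vertex figure is a $(d-1)$-polytope with only $a_v\le d-1$ vertices beyond a simplex. The next step is to invoke the structural classification of $d$-polytopes of excess $d-2$ and $d-1$ from \cite{pineda-v,pineda-excess} to pin down the combinatorial type of each $P/v$, and hence exactly which subsets of edges at $v$ span faces, purely in terms of $G$.

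With the faces through each vertex of $N$ understood, I would run Kalai's argument on $G$. For an acyclic orientation induced by a generic linear functional, every nonempty face has a unique top vertex and is spanned there by in-edges; at a simple vertex the number of faces with that top is $2^{\mathrm{indeg}}$, while at a vertex of $N$ it is instead the number of faces of $P/v$ supported on the in-edge directions, a quantity now computable from the determined vertex figure. Characterizing the ``good'' orientations as the minimizers of the resulting total-face functional, exactly as in the simple case, then lets me read every face off the graph and assemble the face lattice. An equivalent route is a truncation argument in the spirit of \cite{doolittle}: truncating a vertex of $N$ replaces it by a copy of its (now known) vertex figure and reduces the local excess, so iterating drives the configuration toward the simple case already handled.

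I expect the main obstacle to be the middle step: certifying from $G$ alone that the vertex figure at each non-simple vertex is forced, i.e.\ that no competing $d$-polytope sharing the graph realizes a different local structure at some $v\in N$. This is precisely where the bound on the excess must be used in an essential way, as it is what prevents the vertex figures from being rich enough to support alternative reconstructions; carrying this out uniformly across the degree distributions compatible with excess $d-2$ and $d-1$ (a single high-degree vertex, or several vertices of degree $d+1$) is the crux of the proof.
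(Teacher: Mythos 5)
You are reconstructing a proof for a statement that this survey only cites: the theorem is attributed to \cite{pineda-v} and no proof appears in the paper, so there is no in-paper argument to compare against; your proposal must therefore stand on its own, and it does not. Your preliminaries are sound: in a $d$-polytope every vertex has degree at least $d$, with equality exactly at simple vertices (a $(d-1)$-polytope with exactly $d$ vertices is a simplex), and since the total excess $\sum_{v}(\deg v-d)\le d-1<f_0$, some vertex is simple, so the minimum degree of $G$ recovers $d$, hence the excess and the nonsimple set $N$; the gap theorem of \cite{pineda-excess} then confines the excess to $\{0,d-2,d-1\}$, and excess $0$ is Blind--Mani--Kalai. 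But the step you yourself flag as the crux---``invoke the structural classification of $d$-polytopes of excess $d-2$ and $d-1$ \dots to pin down the combinatorial type of each $P/v$ \dots purely in terms of $G$''---is not an available off-the-shelf result you can cite. The excess-degree paper classifies which excess values occur and proves structural facts about minimal-excess polytopes, but it does not assert that the vertex figure at each nonsimple vertex is determined by the graph; establishing that is essentially the entire content of the theorem being proved. As written, your argument delegates all of its nontrivial work to this unproven lemma.

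Both of your proposed endgames inherit the resulting circularity. Kalai's count $\sum_v 2^{\mathrm{indeg}(v)}$ works in the simple case precisely because \emph{every} subset of in-edges at a vertex spans a face, a fact visible from the graph; at $v\in N$ your replacement count (faces of $P/v$ supported on the in-neighbors) takes the vertex figure as input, which is exactly the data being reconstructed, so the minimization criterion for ``good'' orientations cannot even be formulated from $G$ until the missing lemma is supplied. Likewise, truncation in the spirit of \cite{doolittle} is an operation on the polytope, not on the graph: to write down the graph of the truncated polytope you must already know which subsets of edges at $v$ span faces. Finally, Part~\ref{noniso} of Theorem~\ref{nearly-simple} shows that pairs of nonequivalent $d$-polytopes can share even their $(d-3)$-skeleta while having only $d-1$ nonsimple vertices, so no argument that uses only the count and degrees of nonsimple vertices can succeed; the excess bound must be exploited in an essential, quantitative way at exactly the step you left open. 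Your plan is a reasonable research program consistent with the technology of \cite{kalai,doolittle}, but it contains a genuine gap at its decisive step rather than a proof.
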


\section{Crosspolytopes and Centrally Symmetric Polytopes}

Espenschied \cite{espenschied} asked for the dimensions of polytopes that are
1-equivalent to the $d$-crosspolytope.
The $d$-dimensional crosspolytope can be obtained by starting with the
1-polytope and successively taking bipyramids $d-1$ times.
The graph of the $d$-crosspolytope is the complete $d$-partite graph with
2 vertices in each part, denoted $K_{2,2,\ldots,2}$.
This graph does not uniquely determine the polytope, in general.
For example, one can take the convex hull of the 3-crosspolytope in ${\bf R}^4$ 
along with a segment intersecting the 3-crosspolytope only at the interior point
of one of its facets.  The result is a 4-polytope 1-equivalent to the 
4-crosspolytope, having one facet a triangular bipyramid.
In what dimensions other than $d$ do there exist
polytopes $1$-equivalent to the $d$-crosspolytope?

\begin{theorem}[Espenschied \cite{espenschied}]
\label{espens}
Let $d$ be an integer, $d\ge 4$.
\begin{enumerate}
\item If $P$ is a polytope that is 1-equivalent to the
      $d$-crosspolytope, then $\dim(P)\le \lfloor 3d/2 \rfloor - 1$.
      \label{cross1}
\item \label{cross2}
      For every $n$, $d\le n \le \lfloor 3d/2 \rfloor - 1$, there exists
      an $n$-dimensional polytope that is 
      1-equivalent to the $d$-crosspolytope.  
\item If $d\ge 5$, there exists a $(d-1)$-dimensional polytope that is
      1-equivalent to the $d$-crosspolytope.  \label{cross3}
\end{enumerate}
\end{theorem}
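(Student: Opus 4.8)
The plan is to translate the whole statement into the language of Gale diagrams, where the hypothesis becomes a clean separation condition. Write the $d$ non-adjacent pairs of the crosspolytope graph as $\{v_1,v_1'\},\ldots,\{v_d,v_d'\}$, so that a polytope $P$ is $1$-equivalent to the $d$-crosspolytope exactly when these $d$ pairs are its only non-edges. An $n$-dimensional such $P$ has $2d$ vertices and a Gale diagram consisting of $2d$ vectors $g_1,g_1',\ldots,g_d,g_d'$ in ${\bf R}^g$, where $g=2d-n-1$ and $\sum_k g_k=0$. Since $\dim P=2d-1-g$, Part~\ref{cross1} is equivalent to the lower bound $g\ge\lceil d/2\rceil$ on the Gale dimension (one checks $2d-\lfloor 3d/2\rfloor=\lceil d/2\rceil$ for both parities), while Parts~\ref{cross2}--\ref{cross3} amount to exhibiting, for each admissible value of $g$, an explicit diagram realizing exactly the intended matching.

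For the upper bound I would argue as follows. By the Gale criterion, the pair $\{v_i,v_i'\}$ fails to be an edge iff the remaining $2d-2$ vectors lie in a closed halfspace $\langle c_i,x\rangle\ge 0$; and since every open halfspace through the origin must contain at least two Gale vectors (the condition guaranteeing that all $2d$ points are vertices), the opposite open halfspace can only contain $g_i$ and $g_{i'}$, forcing $\langle c_i,g_i\rangle<0$ and $\langle c_i,g_{i'}\rangle<0$ while $\langle c_i,g_k\rangle\ge 0$ for every other $k$. Setting $h_j=g_j+g_{j'}$ and forming the $d\times d$ matrix $A$ with $A_{ij}=\langle c_i,h_j\rangle$ produces a matrix with strictly negative diagonal, nonnegative off-diagonal, and (because $\sum_j h_j=0$) vanishing row sums. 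Writing $A=CH^{\top}$ with $C,H$ of size $d\times g$ gives $\rank A\le g$, so everything reduces to the linear-algebra lemma that such a sign pattern forces $\rank A\ge\lceil d/2\rceil$. I would prove the lemma by factoring $A=\mathrm{diag}(|A_{ii}|)(W-I)$ with $W$ row-stochastic and zero diagonal; then $\ker A=\ker(I-W)$, whose dimension equals the number of closed communicating classes of the Markov chain $W$. A singleton class is impossible since $w_{ii}=0$, so each class has at least two states and there are at most $\lfloor d/2\rfloor$ of them, yielding $\rank A\ge\lceil d/2\rceil$ and hence $g\ge\lceil d/2\rceil$. This rank lemma is the heart of the matter and the step I expect to be the main obstacle to discover; the direct sum of $\lfloor d/2\rfloor$ copies of the $2\times 2$ matrix with $-1$ on the diagonal and $1$ off it shows it is tight.

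For the lower bounds I would produce the diagrams explicitly, the essential case being the top dimension $n=\lfloor 3d/2\rfloor-1$. For even $d=2m$ this means placing $4m$ vectors in ${\bf R}^m$ as $2m$ tight clusters of two, one cluster near each vertex $\pm e_1,\ldots,\pm e_m$ of the $m$-crosspolytope (for $d=4$ this is already the concrete picture of $8$ vectors in the plane). Each cluster-direction is separated from the others by the central hyperplane normal to it, so every matched pair is a non-edge; the work is to verify that sufficiently tight clustering creates no further separable pair, so that no extra non-edges appear, while the ``two vectors in every open halfspace'' vertex condition still holds. Every intermediate dimension then follows for free, because the free sum (direct sum) of polytopes adds dimensions and takes the union of the non-edge matchings: free-summing the extremal $a$-gadget on $a=2t+2$ pairs with $d-a$ segments gives a polytope $1$-equivalent to the $d$-crosspolytope of dimension exactly $d+t$, covering all $n$ from $d$ to $\lfloor 3d/2\rfloor-1$ for both parities of $d$ (and only even gadgets are ever needed).

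Part~\ref{cross3} is a separate and easier explicit construction: here $g=d$, so the inequality $d\le 2g$ is far from tight and there is ample room to position $2d$ vectors in ${\bf R}^d$ realizing precisely the matched pairs. The hypothesis $d\ge 5$ is genuinely needed — for $d=4$ the crosspolytope graph has $24>3\cdot 8-6$ edges, hence is nonplanar and cannot be the graph of a $3$-polytope. In every one of these constructions the routine but genuinely fiddly part is the same: checking that the separable pairs are exactly the intended matching and that the vertex condition is met. The one place where real, irreducible construction effort is unavoidable (it cannot come from a free sum, by the superadditivity visible in the formula) is the extremal even gadget described above.
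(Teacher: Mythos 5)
Your Part~1 argument is correct, and it is a genuinely different route from the one behind the paper's statement. Espenschied's upper bound (as the survey notes in its open-problems section) is graph-theoretic: the graph of an $n$-polytope has a $K_{n+1}$ minor, and Halin computed the Hadwiger number of $K_{2,2,\ldots,2}$ to be $\lfloor 3d/2\rfloor$, giving $n+1\le\lfloor 3d/2\rfloor$ at once. You instead work in the Gale diagram, and your chain of reductions is sound: the vertex condition (every open halfspace contains at least two Gale vectors) correctly forces $\langle c_i,g_i\rangle<0$ and $\langle c_i,g_{i'}\rangle<0$ for each separating functional; the matrix $A=CH^{\top}$ has the stated sign pattern with zero row sums; and the rank lemma via $A=\mathrm{diag}(|A_{ii}|)(W-I)$ is right, since the $1$-eigenspace of a row-stochastic $W$ has dimension equal to the number of closed communicating classes, each of size at least two because $W_{ii}=0$. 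The trade-off: the Hadwiger route is a two-line deduction given Halin's theorem, while yours is self-contained linear algebra that moreover sets up the constructions. For Part~2, however, the paper's method (due to Espenschied) is strictly slicker than yours: the join $X_{j_1}*X_{j_2}*\cdots*X_{j_s}$ with all $j_i\ge 2$ has dimension $d+s-1$ and is $1$-equivalent to $X_d$, covering every $n$ from $d$ to $\lfloor 3d/2\rfloor-1$ with no extremal gadget and nothing to verify. In fact your extremal gadget \emph{is} the Gale diagram of the join of $m$ squares once you collapse each ``tight cluster'' to two coincident vectors (the configuration $+e_j,+e_j,-e_j,-e_j$ for $1\le j\le m$): the doubled pairs separate via $c=-e_j$, and every other pair is blocked from separating by its coincident twins, so the perturbation analysis you flag as fiddly can be eliminated entirely. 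Correspondingly, your closing claim that the extremal even gadget demands irreducible construction effort is an artifact of allowing only free sums; the join, which also adds one dimension per factor and also joins the graphs, manufactures it from squares.

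The genuine gap is Part~3. You offer no construction at all: ``$g=d$, so there is ample room'' is not an argument, and your own $d=4$ observation refutes the heuristic --- for $d=4$ one has just as much room ($8$ vectors in ${\bf R}^4$), yet $K_{2,2,2,2}$ is nonplanar and hence not the graph of any $3$-polytope. So existence for $d\ge 5$ cannot follow from dimension counting; some combinatorial input is unavoidable. The paper's proof rests on a concrete example of Gr\"unbaum: the centrally symmetric $4$-polytope with the ten vertices $\pm e_1,\pm e_2,\pm e_3,\pm e_4,\pm(e_1+e_2+e_3+e_4)$ has the graph of the $5$-crosspolytope, and taking successive bipyramids (each bipyramid adds one dimension and one new non-adjacent antipodal pair, joining both apexes to all old vertices) yields a $(d-1)$-polytope $1$-equivalent to $X_d$ for every $d\ge 5$. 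Without this example or a substitute for it, your Part~3 is unproved --- and, ironically, it is here rather than in your even gadget that the truly irreducible construction effort of the theorem lives, since no join or free sum of crosspolytope-like pieces can drop below dimension $d$.
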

(Part~\ref{cross3} is derived from an example of \cite{grunbaum}; see below.)

Part~\ref{cross2}  generalizes to $k$-equivalence.
\begin{theorem}\label{cross-k}
Let $k$ be a positive integer, $d\ge 2(k+1)$.
For every $n$, $d\le n\le \lfloor (\frac{k+2}{k+1})d \rfloor -1$, there exists
an $n$-dimensional polytope that is $k$-equivalent to the $d$-crosspolytope.
\end{theorem}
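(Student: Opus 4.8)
The plan is to assemble every required polytope as a \emph{direct (free) sum} of smaller pieces, so that the whole theorem reduces to building one family of ``boost'' blocks. First I would isolate the tool that makes free sums usable here. If $P$ and $Q$ both contain the origin in their interiors, the proper faces of $P\oplus Q$ are exactly the joins $F\oplus G=\conv(F\cup G)$ with $F$ a proper face of $P$ and $G$ a proper face of $Q$ (empty faces allowed), and $\dim(F\oplus G)=\dim F+\dim G+1$. Hence a face of $P\oplus Q$ of dimension at most $k$ arises from faces $F,G$ each of dimension at most $k$, so the $k$-skeleton of $P\oplus Q$ is determined by the $k$-skeleta of $P$ and $Q$. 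Because $C_d=C_{d_1}\oplus C_{d_2}$ whenever $d=d_1+d_2$, this yields the key principle: if $P$ is $k$-equivalent to $C_{d_1}$ and $Q$ is $k$-equivalent to $C_{d_2}$, then $P\oplus Q$ is $k$-equivalent to $C_d$, with $\dim(P\oplus Q)=\dim P+\dim Q$.

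Next I would reduce to blocks. For each integer $j\ge 0$ put $m_j=(k+1)(j+1)$ and aim to build a polytope $Q_j$ with $2m_j$ vertices, of dimension $m_j+j$, that is $k$-equivalent to $C_{m_j}$. Granting these, the theorem is bookkeeping: to reach dimension $n=d+j$, free-sum $Q_j$ with $d-m_j$ segments (copies of $C_1$); the requirement $m_j\le d$ is exactly $(k+1)(j+1)\le d$, so $j$ may run from $0$ (which returns $C_d$ itself) up to $\lfloor d/(k+1)\rfloor -1$, giving $n$ from $d$ up to $d+\lfloor d/(k+1)\rfloor -1=\lfloor\frac{k+2}{k+1}d\rfloor -1$. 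Every intermediate $n$ is hit because $j$ takes every integer value in that range. The hypothesis $d\ge 2(k+1)$ is precisely what makes $j=1$ admissible, i.e.\ what guarantees a higher-dimensional example at all.

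The substance is the construction of $Q_j$, which I would do with a Gale diagram in ${\bf R}^D$, $D=k(j+1)$; note $2m_j-(m_j+j)-1=D$, so this is the correct Gale dimension. Choose $m_j$ directions $u_1,\dots,u_{m_j}$ forming a \emph{minimal} positively spanning set of ${\bf R}^D$ (a positive basis), chosen generically so that any $m_j-1$ of them still linearly span; such a set exists for every size between $D+1$ and $2D$, and $D+1\le m_j\le 2D$ because $k\ge 1$. Now take the Gale vector of both vertices in the $i$th antipodal pair to be $u_i$ (the two coincide). Applying the Gale coface criterion---a vertex set $S$ spans a face iff $0$ lies in the relative interior of the convex hull of the complementary Gale vectors---gives everything at once: the complement of an antipodal-free set $S$ still contains a copy of every $u_i$, hence positively spans ${\bf R}^D$ and captures $0$ in its interior, so every such $S$ of size at most $k+1$ is a simplex face; while the complement of a full pair loses the direction $u_i$ entirely, so by minimality it fails to positively span and misses $0$, making each pair a non-face. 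Thus the graph is $K_{2,\dots,2}$, every antipodal-free set of size $k+1$ is a simplex face, and the $k$-skeleton of $Q_j$ matches that of $C_{m_j}$. A dimension count confirms $\dim Q_j=2m_j-D-1=(k+2)(j+1)-1=m_j+j$.

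The step I expect to be the delicate one is this Gale verification, and within it the non-face direction: I must choose the positive basis so that deleting a single $u_i$ not only destroys positive spanning (minimality) but leaves the remaining directions linearly spanning ${\bf R}^D$, so that ``fails to positively span'' correctly translates into ``$0$ is not in the relative interior of the convex hull'' and hence into a genuine non-face. The explicit bases (mixing $\pm$ pairs on part of the coordinates with a simplex on the rest) meet both demands. The remaining points---existence of minimal positive bases of each intermediate size, simpliciality of the small faces (equivalently, affine independence of antipodal-free vertex sets, which again follows from the complement linearly spanning), and honest floor arithmetic when $k+1$ does not divide $d$---are routine.
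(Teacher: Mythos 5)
Your overall architecture is fine: the free-sum principle (faces of $P\oplus Q$ are $F\oplus G$ with $\dim(F\oplus G)=\dim F+\dim G+1$, so free sums preserve $k$-equivalence to crosspolytopes), the reduction to blocks $Q_j$, and the floor arithmetic are all correct, and your Gale computations do establish the two facts you check: antipodal-free sets of size at most $k+1$ are simplex faces, and full antipodal pairs are non-faces. The gap is that these two facts do not imply $k$-equivalence. A face of dimension at most $k$ whose vertex set \emph{contains} an antipodal pair is not excluded by (ii), because a subset of a face's vertex set need not be a face; you must separately rule out non-simplex faces of dimension at most $k$, and your explicit basis fails to do so once $k\ge 2$. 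Concretely, take your suggested basis: $\pm e_1,\dots,\pm e_j$ together with a simplex circuit on the remaining $D-j$ coordinates (sizes match: $m_j=D+j+1$). Let $S$ be the four polytope vertices whose doubled Gale vectors are $e_1,e_1,-e_1,-e_1$. The complementary Gale vectors are the doubled remaining basis elements, which positively span the hyperplane $x_1=0$, so $0$ lies in the relative interior of their convex hull and $S$ \emph{is} a face; it has $4$ vertices and dimension $4-1-1=2$, i.e., it is a quadrilateral $2$-face containing two antipodal pairs. No such face exists in the $2$-skeleton of $C_{m_j}$ (all faces of a crosspolytope are simplices), so for every $k\ge 2$ and $j\ge 1$ your $Q_j$ is not $k$-equivalent to $C_{m_j}$. (For $k=1$ the quadrilaterals live above the skeleton being matched, and your construction does work.)

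The failure is structural, not an unlucky choice within your stated genericity condition: minimal positive bases decompose as direct sums of simplex circuits $B_1,\dots,B_s$ spanning complementary subspaces, and each part of size $d_t+1$ yields, after doubling, a non-simplex face of dimension $d_t+1$ on $2(d_t+1)$ vertices. Since $\sum_t d_t = k(j+1)$ with $s=j+1$ parts, the only way to push all such faces above dimension $k$ is the balanced choice: $j+1$ circuits, each of $k+1$ vectors spanning a $k$-dimensional summand. With that choice one can check that the faces of $Q_j$ are exactly ``per part: an antipodal-free subset, or everything,'' which is the face lattice of the join $X_{k+1}*X_{k+1}*\cdots *X_{k+1}$ ($j+1$ factors) --- precisely the paper's construction. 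The paper bypasses Gale diagrams entirely: it takes joins $X_{j_1}*\cdots *X_{j_s}$ with each $j_i\ge k+1$ and $j_1+\cdots+j_s=d$, of dimension $d+s-1$, and reads off $k$-equivalence to $X_d$ from the join's face lattice, the hypothesis $j_i\ge k+1$ guaranteeing that every non-simplex face of the join has dimension at least $k+1$; varying $s$ then covers all $n$ with $d\le n\le \lfloor \frac{k+2}{k+1}d\rfloor -1$ with no free sums needed. So your proposal is repairable, but as written the ``delicate step'' you yourself flagged is genuinely broken, and for a condition (no low-dimensional pair-containing faces) that your verification never checks.
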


The proof of 
Part~\ref{cross2} of Theorem~\ref{espens} follows from 
Espenschied's observation  that by taking joins of
a sequence of crosspolytopes, one obtains polytopes 1-equivalent to a
crosspolytope.  The idea extends to give Theorem~\ref{cross-k} as well.
\begin{definition}{\em The {\em join} $P*Q$ of $d$-polytope $P$ and $e$-polytope
$Q$ is the convex hull of copies of $P$ and $Q$ embedded 
in skew affine subspaces of ${\bf R}^{d+e+1}$.}
\end{definition}

Denote the $j$-dimensional crosspolytope by $X_j$.
For integers $j_1, j_2, \ldots, j_s$, $j_i\ge k+1$, let 
$d=j_1+j_2+\cdots +j_s$.
The join
$X_{j_1}*X_{j_2}*\cdots *X_{j_s}$ is a polytope of dimension
$d+s-1$, and is $k$-equivalent to the crosspolytope
$X_d$.
This construction gives Theorem~\ref{cross-k} and, in particular, 
Part~\ref{cross2} of Theorem~\ref{espens}.
Note that the graph of the crosspolytope is $d$-colorable, so any 
$n$-polytope with $n>d$ that is 1-equivalent to $X_d$ can have no 
simplex faces of dimension greater than $d-1$.

The regular $d$-crosspolytope is centrally symmetric; it is clearly the
centrally symmetric $d$-polytope with the fewest vertices.
Gr\"{u}nbaum \cite[Sec.\ 6.4]{grunbaum} gives the following example of a 
centrally symmetric 
4-polytope with 10 vertices and 40 edges.  
This 4-polytope has vertices
$\pm e_1$, $\pm e_2$, $\pm e_3$, $\pm e_4$,
\mbox{$\pm (e_1+e_2+e_3+e_4)$}, and has the graph of the 5-crosspolytope.
By taking successive bipyramids over Gr\"{u}nbaum's 4-polytope, one obtains
the polytopes of Theorem~\ref{espens}, Part~\ref{cross3}.
Espenschied observes that the join of 
Gr\"{u}nbaum's 4-polytope with a square is a 
 7-dimensional polytope that is 1-equivalent, but not combinatorially
equivalent, to the 7-crosspolytope; it is, after all, nonsimplicial.

A centrally symmetric polytope $P$ is {\em centrally symmetric $k$-neighborly}
if every set of $k$ vertices of $P$, no two of which are antipodes, is
the vertex set of a $(k-1)$-face of $P$.
The $d$-crosspolytope is centrally symmetric $k$-neighborly for all $k\le d$.
Thus a centrally symmetric polytope with $2d$ vertices is
$k$-neighborly if and only if it
is $(k-1)$-equivalent to the $d$-crosspolytope.
McMullen and Shephard \cite{mcmullen} 
introduced centrally symmetric diagrams (analogues of
Gale diagrams) to study centrally symmetric polytopes with few vertices.
They used these to construct $d$-polytopes that have $2(d+2)$ vertices and are 
centrally symmetric $k$-neighborly, for $k\approx d/3$.
Barvinok, Lee and Novik \cite{barvinok} construct centrally symmetric
$k$-neighborly $n$-polytopes where $n$ is small relative to the number of
vertices, that is, small relative to the dimension $d$ of the $(k-1)$-equivalent
$d$-crosspolytope.  In particular, they give polytopes of dimension approximately
$2\log_3(2d)$, 1-equivalent to the $d$-crosspolytope.
See also \cite{bpsz} for construction of centrally symmetric neighborly spheres.

Joswig and Ziegler \cite{joswig-ziegler} ask if for every $n$, $4\le n\le d$
and
$k=\lfloor n/2\rfloor -1$ there is an $n$-polytope $k$-equivalent to the
$d$-crosspolytope.
Linial and Novik \cite{linial-novik} prove the existence
of $k$-neighborly centrally symmetric $n$-polytopes with $2d$ vertices, 
that is, $n$-polytopes $(k-1)$-equivalent to the $d$-crosspolytope,
with asymptotic estimates for $k$ in terms of $n$ and $d$. 
(See also Donoho \cite{donoho2}.)

Recall that the graph of the crosspolytope is the complete multipartite graph
with all parts of size 2.  What other complete multipartite graphs are the 
graphs of polytopes?

\begin{proposition}[Espenschied \cite{espenschied}]
For $1\le i\le t$, let $n_i\in \{1,2\}$.  
The complete multipartite graph $K_{n_1,n_2,\ldots, n_t}$ ($t>1$) is the graph
of a polytope if and only if 
$\{n_1,n_2,\ldots, n_t\}$ is not one of the multisets $\{1,2\}$ or $\{1,1,2\}$.
\end{proposition}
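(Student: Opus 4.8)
The plan is to reparametrize the problem by part sizes: write the multiset as consisting of $a$ parts of size $1$ and $b$ parts of size $2$, so the graph is $G=K_{1^a,2^b}$ on $n=a+2b$ vertices, with $a+b=t\ge 2$. Combinatorially $G$ is $K_n$ with a matching of $b$ edges deleted (the $b$ antipodal non-edges), and the two forbidden multisets are $\{1,2\}$, i.e.\ $(a,b)=(1,1)$, and $\{1,1,2\}$, i.e.\ $(a,b)=(2,1)$. First I would dispose of the ``only if'' direction, which is a census of polytopes with few vertices. The graph $K_{1,2}$ is the path $P_3$ on three vertices; but every $d$-polytope has at least $d+1$ vertices and minimum degree at least $d$, so a polytope with three vertices must be a triangle, whose graph is $K_3\neq P_3$. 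Likewise $K_{1,1,2}=K_4-e$ has four vertices, and a polytope with four vertices is either the tetrahedron (graph $K_4$) or a quadrilateral (graph $C_4$); since $K_4-e$ is neither, it is not a polytope graph.

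For the ``if'' direction I would give explicit constructions, organized by the number $b$ of size-$2$ parts, using two operations whose effect on the graph is transparent. Taking the pyramid $\mathrm{pyr}(P)=P*\{v\}$ adds a single apex adjacent to every vertex of $P$ while preserving all edges of $P$ (as $P$ is a facet of the pyramid) and creating no new internal edges; thus it appends one singleton part, sending $K_{1^a,2^b}$ to $K_{1^{a+1},2^b}$ for any $P$. Taking the bipyramid adds two mutually non-adjacent apexes, each adjacent to every vertex of $P$; this appends one size-$2$ part, sending $K_{1^a,2^b}$ to $K_{1^a,2^{b+1}}$, \emph{provided} $\dim P\ge 2$, since only then is every edge of $P$ a proper face of $P$ and hence retained in the bipyramid. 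With these in hand the cases are: (i) $b=0$, $a\ge 2$: the $(a-1)$-simplex has graph $K_{1^a}$; (ii) $b\ge 2$: start from the crosspolytope $X_b$ (graph $K_{2^b}$) and apply $a$ pyramids, equivalently form the join $X_b*T$ with the $(a-1)$-simplex $T$, obtaining $K_{1^a,2^b}$ for every $a\ge 0$ (the join preserves the antipodal non-edges of $X_b$); (iii) $b=1$, $a\ge 3$: take the bipyramid over the $(a-1)$-simplex, whose dimension $a-1\ge 2$ guarantees the base stays a clique $K_a$, yielding $K_{1^a,2}$.

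The one delicate point---and the reason the two exceptions appear exactly where they do---is the dimension hypothesis in the bipyramid step. When the base is one-dimensional the construction breaks: the bipyramid over a segment is the square, whose graph is the $4$-cycle $K_{2,2}$ rather than $K_{1,1,2}$, because the segment's single edge, being the whole polytope rather than a proper face, is destroyed. Thus a size-$2$ part can be created only over a base of dimension at least $2$, which forces $a\ge 3$ in case (iii) and leaves $(a,b)=(1,1)$ and $(2,1)$ unreachable by this route, matching the non-realizability found above. I would close by checking that these really are the only gaps: every admissible pair $(a,b)$ with $a+b\ge 2$ falls into (i), (ii), or (iii) apart from the two excluded pairs.
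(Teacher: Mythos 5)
Your proposal is correct and follows essentially the same route as the paper, which justifies the realizability direction with exactly the remark that ``these graphs can all be realized with iterated pyramids and bipyramids'' (over a base crosspolytope or simplex). You usefully supply the details the survey leaves implicit---the census of polytopes with at most four vertices ruling out $K_{1,2}$ and $K_{1,1,2}$, and the observation that the bipyramid step requires a base of dimension at least $2$, which is precisely what makes those two multisets exceptional---but these are elaborations of, not departures from, the paper's argument.
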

These graphs can all be realized with iterated pyramids and bipyramids.

Espenschied guessed that these were the only complete multipartite graphs
realizable as graphs of polytopes.
It is easy to check, using the nonplanarity of $K_{3,3}$, that $K_{m,n}$ is not
the graph of any polytope, when $m$ and $n$ are both at least 3 \cite{barnette}.
However, Firsching \cite{firsching} has found examples showing Espenschied's 
guess is incorrect.  
He has examples of 4-polytopes with 9 vertices with the following graphs: 
$K_{3,2,2,2}$, $K_{3,2,2,1,1}$, $K_{3,2,1,1,1,1}$ and $K_{3,1,1,1,1,1,1}$.
Zheng \cite{zheng} constructs simplicial 3-spheres with graph 
$K_{4,4,4,4}$, but they are believed not to be polytopal.

\section{Cubical Polytopes and Zonotopes}

Perhaps the first thing to note about cubical polytopes is that the results
about crosspolytopes can be dualized to results about cubes.  
A polytope 1-equivalent to a $d$-crosspolytope then corresponds to a polytope
that shares the facet-ridge graph with a cube.

\begin{definition}
{\em A polytope is {\em cubical} if and only if all of its proper faces are
combinatorially equivalent to cubes.}
\end{definition}

\begin{theorem}[Joswig and Ziegler \cite{joswig-ziegler}]
\mbox{}
\begin{enumerate}
\item
For $n\ge d\ge 2k+2$, there exists a cubical $d$-polytope $k$-equivalent
to the $n$-dimensional cube.
\item If a $d$-polytope $P$ is $k$-equivalent to the $n$-cube for $k\ge d/2$,
then $P$ is a $d$-cube.
\end{enumerate}
\end{theorem}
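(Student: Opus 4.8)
The two parts are of quite different character, so I would treat them separately and in reverse order of difficulty: the rigidity statement in part~(2) can be assembled from results already stated, whereas part~(1) requires a genuine construction.

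For part~(2), the first thing to do is pin down the dimension of the cube. Since $P$ is a $d$-polytope that is $k$-equivalent to the $n$-cube and $k\ge d/2\ge\lfloor d/2\rfloor$, part~(2) of Theorem~\ref{dim-ambig} forces the $n$-cube to have dimension $d$, i.e.\ $n=d$; this is the only place the hypothesis $k\ge d/2$ is used. Now $P$ and the $d$-cube are $d$-polytopes with isomorphic $k$-skeleta, and in particular (as $k\ge 1$) with isomorphic graphs. The graph of the $d$-cube is $d$-regular, so $P$ has a $d$-regular graph; since every vertex of a $d$-polytope lies on at least $d$ edges, with equality exactly at the simple vertices, $P$ is a \emph{simple} $d$-polytope. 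Finally, the Blind--Mani--Kalai theorem says a simple $d$-polytope is determined by its graph, so $P$ is combinatorially the $d$-cube. The only points needing care are the routine implications $k\ge d/2\Rightarrow k\ge\lfloor d/2\rfloor$ for integer $k$ and ``$d$-regular graph $\Rightarrow$ simple''.

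For part~(1) there is no such shortcut. Products of cubes are again cubes, and, as the remark opening the section makes precise, the join construction that handled crosspolytopes dualizes to facet-ridge data rather than to $k$-skeleta, so it does not transfer here. The object to build is a \emph{neighborly cubical polytope}, and I would follow the projected-deformed-product approach. I would first reduce to the extreme case $k=\lfloor d/2\rfloor-1$ (the largest $k$ with $2k+2\le d$), since a cubical $d$-polytope carrying the $(\lfloor d/2\rfloor-1)$-skeleton of the $n$-cube automatically carries its $k$-skeleton for every smaller $k$; the case $n=d$ is the cube itself. For $n>d$ I would realize the $n$-cube as a suitably \emph{deformed} cube (a deformed product of $n$ segments, combinatorially still the $n$-cube but with vertex coordinates arranged to have a large shadow) and then apply a linear projection $\pi\colon{\bf R}^n\to{\bf R}^d$.

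The heart of the argument, and the step I expect to be the main obstacle, is verifying that $\pi$ \emph{strictly preserves} every face of dimension at most $k=\lfloor d/2\rfloor-1$: that each such face maps isomorphically onto a face of $\pi(P)$ and that no other vertices land in its image. This is exactly where the bound $2k+2\le d$ enters, being the same threshold that governs $\lfloor d/2\rfloor$-neighborliness and that allows a $d$-dimensional shadow to retain a $k$-skeleton. I would choose the deformation so that the Projection Lemma (the Sanyal--Ziegler sign criterion) applies to all low-dimensional faces simultaneously, yielding a combinatorial isomorphism of $k$-skeleta between $\pi(P)$ and the $n$-cube. Two things then remain: that $\pi(P)$ is genuinely $d$-dimensional, which is clear since $\pi$ is onto, and, more delicately, that it is \emph{cubical}. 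For the latter I would argue that every face of $\pi(P)$ is the image of a face of the deformed cube, and, since every face of a cube is a cube, arrange the deformation so that each facet of $\pi(P)$ is a combinatorial cube. Making the single deformation-and-projection achieve both the full preservation of the $k$-skeleton and the cubicality of every facet is the technical crux; the remainder is bookkeeping.
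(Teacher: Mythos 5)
A preliminary remark on the comparison you ask for: this survey states the theorem without proof, citing Joswig and Ziegler \cite{joswig-ziegler}, so the benchmark is the original source rather than an argument appearing in this paper. Measured against that, your part~(2) is a complete and correct proof, and it is essentially the original one: Part~2 of Theorem~\ref{dim-ambig} pins down $n=d$ (your integrality point that $k\ge d/2$ gives $k\ge\lfloor d/2\rfloor$ is the right care to take), $d$-regularity of the cube graph forces every vertex of $P$ to be simple since a vertex of a $d$-polytope has degree at least $d$, and the Blind--Mani--Kalai theorem applies as stated because both $P$ and the $d$-cube are then simple $d$-polytopes of the same dimension. Nothing is missing there.

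Your part~(1) identifies exactly the right construction --- Joswig and Ziegler do build their neighborly cubical polytopes by projecting a deformed realization of the $n$-cube to ${\bf R}^d$, and the Sanyal--Ziegler projection framework \cite{sanyal} you invoke is the later systematization of that very technique; your reduction to the extreme case $k=\lfloor d/2\rfloor-1$ is also valid, since the smaller skeleta are subcomplexes of that one. But as a proof it halts at the crux you yourself flag: no deformation is exhibited, the strict-preservation criterion is not verified for all faces of dimension at most $\lfloor d/2\rfloor-1$ simultaneously, and cubicality is asserted rather than derived. On the last point, note that ``every face of $\pi(P)$ is the image of a face of the cube'' does not by itself give cubicality: the face of the cube mapping onto a given facet of $\pi(P)$ need not project injectively, so one must arrange the deformation so that $\pi$ restricts to an affine isomorphism on each such face --- this, together with the simultaneous preservation of the low skeleton, is where essentially all of the work in \cite{joswig-ziegler} lies. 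In short: part~(2) is a proof, matching the original; part~(1) is a correct road map whose central construction remains to be carried out.
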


Cubical $d$-polytopes that are $(\lfloor d/2\rfloor -1)$-equivalent to an
$n$-dimensional cube (for some $n\ge d$) are called {\em neighborly cubical}
polytopes.
A neighborly cubical $d$-polytope may not be reconstructible from its
$(\lfloor d/2\rfloor -1)$-skeleton, however.
Joswig and Ziegler \cite{joswig-ziegler} give an example of a 4-polytope that
is 1-equivalent to a 5-cube (and thus to a neighborly cubical 4-polytope), but
that is not itself cubical.  (It has a facet that can be subdivided into two
3-cubes.)
See also \cite{bbc,joswig-rorig}.

Joswig \cite{joswig-reconst} shows that a certain class of cubical polytopes,
``capped cubical polytopes,'' can be reconstructed from their graphs.

Another generalization of the cube is the zonotope, the Minkowski sum of 
1-polytopes.  

\begin{theorem}
\mbox{}
\begin{enumerate}
\item \textbf{{\em (Bj\"{o}rner, Edelman and Ziegler \cite{bez})}}
      Zonotopes are determined by
      their graphs.
\item \textbf{{\em (Babson, Finschi and Fukuda \cite{bff})}}
      Duals of cubical zonotopes are determined by their graphs.
\end{enumerate}
\end{theorem}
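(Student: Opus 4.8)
The plan is to route both parts through the theory of oriented matroids, exploiting the standard dictionary between a zonotope and the hyperplane arrangement (equivalently, the oriented matroid) of its generating segments. For Part~1, I would fix a zonotope $Z=\sum_{i=1}^{n}[0,v_i]$ and pass to the oriented matroid $\mathcal{M}$ of the vector configuration $v_1,\dots,v_n$. The standard correspondence puts the nonempty faces of $Z$ in bijection with the covectors of $\mathcal{M}$: the vertices of $Z$ are the topes (the maximal covectors, i.e.\ sign vectors in $\{+,-\}^n$), and two vertices span an edge precisely when the corresponding topes are separated by a single hyperplane, that is, when their sign vectors differ in exactly one coordinate. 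Thus the graph $G(Z)$ is exactly the \emph{tope graph} of $\mathcal{M}$, and the problem reduces to recovering $\mathcal{M}$, and hence the whole covector poset and the face lattice of $Z$, from the abstract graph $G(Z)$.

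The engine for this recovery is the fact that a tope graph is a \emph{partial cube}: the sign-vector labeling embeds it isometrically into the hypercube $\{+,-\}^n$, with graph distance equal to the number of hyperplanes crossed (the size of the separation set). I would reconstruct this labeling intrinsically by computing the Djokovi\'c--Winkler relation $\Theta$ on the edges of $G(Z)$; its classes are forced to coincide with the $n$ coordinate directions, so one recovers a labeling of the vertices by sign vectors, canonical up to permuting coordinates and flipping signs---exactly the ambiguity of relabeling and reorienting $\mathcal{M}$. This extracts the \emph{set of topes} of $\mathcal{M}$ purely from $G(Z)$. The final step invokes the cryptomorphism expressing an oriented matroid through its topes: the tope set determines all covectors, and therefore the face lattice. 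The main obstacle lives here---one must prove that the purely graph-theoretic $\Theta$-classes really do reproduce the coordinatization, so that no two non-isomorphic oriented matroids can share a tope graph. This is the genuine combinatorial content of the Bj\"{o}rner--Edelman--Ziegler argument.

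For Part~2, I would dualize the picture. The relevant graph is the graph of the dual polytope, i.e.\ the facet-ridge (facet-adjacency) graph of the cubical zonotope $Z$. Here the facets of $Z$ correspond to the cocircuits of $\mathcal{M}$, and the cubicality hypothesis is a strong structural constraint governing how facets meet along ridges. I would aim to realize this facet-ridge graph as a partial-cube-like object attached to the dual data, and then run the same $\Theta$-class reconstruction to recover the cocircuit structure, from which the dual face lattice follows. As in Part~1, the hard part is showing that the abstract adjacency graph pins down the coordinatewise (cocircuit) structure uniquely; the cubicality hypothesis is precisely what makes the dual adjacencies rigid enough for this to succeed, and the Babson--Finschi--Fukuda proof is in effect the verification that this rigidity suffices.
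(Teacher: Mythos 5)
This is a survey; the paper states the theorem without proof, citing \cite{bez} and \cite{bff}, so your proposal can only be measured against those original arguments. For Part~1 your outline is essentially the standard route and is sound in structure: the graph of a zonotope is the tope graph of the oriented matroid of its generators, tope graphs are partial cubes with graph distance equal to the size of the separation set, the isometric hypercube embedding is unique up to hypercube automorphisms (Djokovi\'c--Winkler), and the tope set determines all covectors (a theorem of da Silva and Lawrence, not something you need to reprove). You explicitly defer the one substantive verification (that the $\Theta$-classes reproduce the coordinatization), which is the content of the cited results, so Part~1 is an acceptable reduction-to-known-theorems sketch, modulo the small caveat that you should first pass to pairwise non-parallel generators (a simple oriented matroid) so that adjacent topes differ in exactly one coordinate.

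Part~2, however, contains a genuine gap: the plan to ``run the same $\Theta$-class reconstruction'' on the facet-ridge graph cannot work, because that graph is not a partial cube. The relevant graph is the cocircuit graph of the oriented matroid (the graph of the arrangement sphere, i.e.\ of the normal fan), and it is in general not even bipartite: already for the $3$-cube, which is a cubical zonotope on three generators, the dual graph is the octahedron $K_{2,2,2}$, which contains triangles, so no isometric hypercube embedding and no Djokovi\'c--Winkler classes exist. The missing idea, which is the actual pivot of the Babson--Finschi--Fukuda theorem, is that a zonotope is cubical exactly when its generators are in general position, i.e.\ when the oriented matroid is \emph{uniform}; their theorem is that a uniform oriented matroid can be reconstructed (indeed efficiently) from its cocircuit graph, and the proof proceeds by a direct analysis of that graph---locating antipodal cocircuit pairs and the coordinate cycles coming from rank-$2$ contractions---rather than by any partial-cube machinery. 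Your sketch neither identifies cubicality with uniformity nor offers a substitute for the failed $\Theta$-class step, so as written Part~2 would not go through.
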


\section{Eulerian Posets}

In the previous sections we have seen conditions under which skeletal 
information about a polytope enables us to reconstruct the entire face
lattice.  What if we consider face lattices of polytopes within the larger
class of Eulerian posets?  

\begin{definition} {\em An {\em Eulerian poset} is a graded, finite partially
ordered set such that in each interval, the number of
elements of even rank equals the number of elements of odd rank.}
\end{definition}

The face lattice of a $d$-polytope is an Eulerian poset of rank $d+1$.
In the mixed company of polytopes and posets, there is always the difficulty
of choosing between dimensions and ranks.
Here the dimension perspective will prevail.
For a rank $d+1$ Eulerian poset $Q$, write $Q_k$ for the set of rank $k+1$
elements (also called dimension $k$ elements).

Suppose we change the hypothesis of Theorem~\ref{polytope-rec} from 
``Assume $P$ and $Q$ are $d$-polytopes'' to ``Assume $P$ is a 
$d$-polytope and $Q$ is a rank $d+1$ Eulerian poset.''  Do we get a 
theorem?  No, not even for Part~3 of the theorem.  Figure~1
shows an Eulerian poset of rank~4 with four atoms that is not the Boolean
algebra (face lattice of a 3-simplex).  It also serves to illustrate the
proof of Theorem~\ref{Eulerian}.

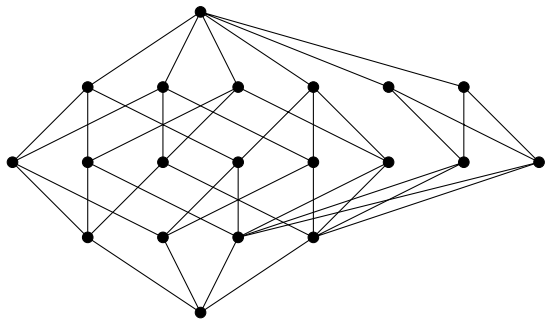
\begin{figure}[h]\label{not-simplex}
\centering

\begin{tikzpicture}
\draw (3.5,0) --(2,1) --(1,2) --(2,3)--(3.5,4)--(3,3)--(1,2)--(3,1);
\draw (3.5,0)--(3,1)--(4,2)--(2,3)--(2,2)--(2,1)--(3,2)--(3,3)--(5,2);
\draw (3.5,0)--(4,1)--(2,2)--(4,3)--(3,2)--(5,1)--(5,2)--(3,1);
\draw (3.5,0)--(5,1)--(6,2)--(4,3)--(3.5,4)--(5,3)--(4,2)--(4,1);
\draw (4,1)--(6,2)--(5,3)--(5,2);
\draw (4,1)--(7,2)--(6,3)--(3.5,4)--(7,3)--(7,2)--(5,1)--(8,2)--(6,3);
\draw (4,1)--(8,2)--(7,3);
\filldraw[black] (3.5,0) circle (2pt);
\filldraw[black] (2,1) circle (2pt);
\filldraw[black] (3,1) circle (2pt);
\filldraw[black] (4,1) circle (2pt);
\filldraw[black] (5,1) circle (2pt);
\filldraw[black] (1,2) circle (2pt);
\filldraw[black] (2,2) circle (2pt);
\filldraw[black] (3,2) circle (2pt);
\filldraw[black] (4,2) circle (2pt);
\filldraw[black] (5,2) circle (2pt);
\filldraw[black] (6,2) circle (2pt);
\filldraw[black] (7,2) circle (2pt);
\filldraw[black] (8,2) circle (2pt);
\filldraw[black] (2,3) circle (2pt);
\filldraw[black] (3,3) circle (2pt);
\filldraw[black] (4,3) circle (2pt);
\filldraw[black] (5,3) circle (2pt);
\filldraw[black] (6,3) circle (2pt);
\filldraw[black] (7,3) circle (2pt);
\filldraw[black] (3.5,4) circle (2pt);

\end{tikzpicture}
\caption{An Eulerian poset 0-equivalent to the 3-simplex}
\end{figure}

This generalizes to a construction of an Eulerian poset of rank $d+1$
that is $(d-3)$-equivalent to the $d$-simplex.
In fact, it generalizes much further.

\begin{theorem}\label{Eulerian}
If $P$ is a $d$-polytope, $d\ge 3$, and  $j$ is an integer, $0\le j\le d-1$,
then there exists a rank $d+1$ Eulerian poset, not isomorphic to the face
lattice of $P$, but which differs from the face lattice of $P$ only in 
dimensions $j$ and $j+1$.
\end{theorem}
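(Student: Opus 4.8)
\section*{Proof proposal}

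The plan is to generalize the digon insertion exhibited by the rank-$4$ example drawn above, in which two edges of the $3$-simplex are each split into a pair of parallel edges bounding a new $2$-gonal face. I will treat the range $0\le j\le d-2$ first. Choose a single $j$-face $\sigma$ of $P$. I would replace $\sigma$ by two parallel copies $\sigma_+,\sigma_-$ sharing its boundary, so that $[\hat 0,\sigma_+]$ and $[\hat 0,\sigma_-]$ are both copies of the old interval $[\hat 0,\sigma]$, and I would insert a single new $(j+1)$-face $D$, a \emph{digon} whose only proper $j$-faces are $\sigma_+$ and $\sigma_-$. To say how the faces above $\sigma$ reattach, I use the quotient $P/\sigma$, a $(d-j-1)$-polytope whose boundary sphere has the $(j+1)$-faces through $\sigma$ as its vertices; a generic equatorial hyperplane section $H$ bisects that boundary into two balls $\Lambda_\pm$. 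I then declare the $(j+1)$-faces in $\Lambda_+$ to lie above $\sigma_+$, those in $\Lambda_-$ above $\sigma_-$, and $D$ to lie below exactly those faces of $P$ that $H$ meets. The point is that this reuses every face of dimension $\ge j+2$ unchanged, so the only new elements are $\sigma_-$ and $D$.

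Next I would check that the resulting poset $Q$ is Eulerian of rank $d+1$ and differs from the face lattice of $P$ only in dimensions $j$ and $j+1$. Every interval disjoint from the two modified ranks is untouched, so it suffices to examine the intervals meeting dimensions $j,j+1$, and these reduce to a short list of standard Eulerian pieces: $[\hat 0,\sigma_\pm]$ is a copy of $[\hat 0,\sigma]$; $[\hat 0,D]$ is the face poset of a $(j+1)$-dimensional lens, namely the $j$-disk $\sigma$ doubled across its boundary, which is a thin CW sphere and hence Eulerian; $[D,\hat 1]$ is the face lattice of the section polytope $(P/\sigma)\cap H$, of the correct rank $d-j-1$; and each interval $[F,G]$ with $\dim F=j-1$ and $\dim G=j+2$ is again a polygon. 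Granting these, the balance of even- and odd-rank elements in every interval follows. Since the construction adds exactly one $j$-face and one $(j+1)$-face and alters no other face number, the $f$-vector of $Q$ differs from that of $P$ precisely in positions $j$ and $j+1$; as the $f$-vector is an isomorphism invariant, $Q$ is not isomorphic to the face lattice of $P$.

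The main obstacle, I expect, is twofold. First, the ``pull $\sigma$ apart and glue in a flat lens'' picture must be made combinatorially rigorous: one has to verify that the single section $H$ induces a \emph{consistent} bisection of the link all the way around $\sigma$, and in particular that $[D,\hat 1]$ is genuinely the Eulerian face lattice of the section rather than some degenerate poset; this is where the argument does real work rather than bookkeeping. Second, and more seriously, the endpoint $j=d-1$ lies outside this scheme: there is no $(j+2)$-face to bisect, and dimension $j+1=d$ consists of the single top element, so one is permitted only to alter the facets while holding the entire $(d-2)$-skeleton fixed. By Part~\ref{gen-equiv} of Theorem~\ref{polytope-rec} the facets of an honest polytope are already determined by that skeleton, so any admissible $Q$ here must be genuinely non-polytopal; I would handle this case by a separate, top-rank modification (re-choosing the facets meeting along a chosen ridge) and I regard verifying that such a local re-choice both exists and preserves the Eulerian condition as the hardest point of the argument.
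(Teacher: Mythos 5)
Your construction for $0\le j\le d-2$ is plausible, but the theorem claims all of $0\le j\le d-1$, and you have explicitly left $j=d-1$ unproved: you offer only the hope of ``a separate, top-rank modification'' and yourself call its verification the hardest point. That is a genuine gap, and it stems from a misreading of the allowed freedom. You assume that at $j=d-1$ dimension $j+1=d$ must remain a single top element, so that only the coatoms may change; but nothing in the statement or in the paper's definition of an Eulerian poset (graded, finite, with balanced intervals) forces $Q$ to have a unique maximal element. The paper's proof exploits exactly this: it keeps \emph{every} face of $P$ in place, chooses $F\subset G$ with $\dim F=j$, $\dim G=j+1$, and adjoins four new elements $A_1,A_2$ (dimension $j$) and $B_1,B_2$ (dimension $j+1$) with $A_i<B_\ell$, with the proper faces of $F$ below all four and the faces strictly containing $G$ above all four. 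For $j=d-1$ one simply takes $G=P$ itself; then $B_1,B_2$ sit at the top rank alongside $\hat 1$ and the poset has three maximal elements, which is perfectly admissible. The Eulerian check reduces to a two-line count (e.g.\ in $[C,B_1]$ one loses $F$, gains $A_1,A_2$ at the same rank and $B_1$ one rank up), uniformly in $j$. Your appeal to Part~\ref{gen-equiv} of Theorem~\ref{polytope-rec} correctly shows any such $Q$ must be non-polytopal, but that observation does not produce one.

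Even in the range $0\le j\le d-2$ your verification is under-specified in a way that matters. Your ``short list'' of intervals ($[\hat 0,\sigma_\pm]$, $[\hat 0,D]$, $[D,\hat 1]$, and rank-$3$ intervals $[F,G]$) omits the mixed intervals $[C,G]$ with $C$ well below $\sigma$ and $G$ well above it, and the upper intervals $[\sigma_\pm,\hat 1]$, which are not face lattices of the truncated quotient (your $D$ is a single atom there, whereas a genuine truncation facet contributes many faces). This is repairable: since in your $Q$ the only change to any old interval is that $\sigma$ is replaced by $\{\sigma_+\}$, by $\{\sigma_-\}$, or by $\{\sigma_+,\sigma_-,D\}$ according to whether the top of the interval sees vertices of $P/\sigma$ on one or both sides of $H$ (and, generically, a face with vertices on both sides is crossed by $H$, so $D$ enters exactly when both $\sigma_\pm$ do), the parity balance is preserved in every case. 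So your geometric splitting can be made to work below the top dimension, and it is a genuinely different, more economical construction (two new elements instead of four, at the cost of rewiring incidences); but as submitted it does not prove the theorem, both because the Eulerian check is incomplete and, decisively, because the endpoint case $j=d-1$ is missing, while the paper's purely order-theoretic adjunction handles all cases at once without touching any existing incidence.
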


\begin{proof}
Choose a $j$-face $F$ of $P$ and a $(j+1)$-face $G$ containing $F$.
Construct a poset $Q$ whose elements are all the elements of the face 
lattice of $P$ along with four new elements $A_1$, $A_2$, $B_1$ and $B_2$.
The pairs in the poset are those pairs in the face lattice of $P$ along 
with the following pairs: 
\begin{itemize}
\item $(A_i, B_\ell)$ for $i, \ell \in \{1,2\}$
\item $(C, A_i)$ and $(C,B_\ell)$, for $i, \ell \in \{1,2\}$ and $C$ a proper face of
      $F$
\item $(A_i, D)$ and $(B_\ell,D)$ for $i, \ell \in \{1,2\}$ and $D$ a face of $P$
      properly containing $G$.
\end{itemize}
(See Figure~1.)
By construction, the elements $A_i$ are of dimension $j$, the elements $B_\ell$
are of dimension $j+1$, and 
$Q$ agrees with the face lattice of $P$ except at dimensions
$j$ and $j+1$.  It is straightforward to check the interval condition to
show that  $Q$ is Eulerian.  We
illustrate with one of the more interesting cases, an interval of the 
form $[C,B_1]$, where $C$ is a proper face of  $F$.  Since the face lattice
of $P$ is Eulerian, the interval $[C,F]$ in $P$ has the same number of
elements of even and odd rank.  The interval $[C,B_1]$ of $Q$ has all
elements of $[C,F]$ except $F$, has two other elements ($A_1$ and $A_2$)  
of the same rank as $F$, and has one other element ($B_1$) of rank one more.
So the interval $[C,B_1]$ of $Q$ has the same number of elements of even 
and odd rank.
\end{proof}

Thus we know that Parts~\ref{simplicial-equiv} and~3 of Theorem~\ref{polytope-rec} fail in the
generality of Eulerian posets.  Part~\ref{gen-equiv} fails even for $P$ a 
simplicial polytope (but not for $P$ a simplex) and $Q$ an Eulerian poset.
Consider the $d$-crosspolytope.  The $2^d$ facets are naturally partitioned
into two sets, so that no two facets in the same set intersect at a ridge
($(d-2)$-face).  Remove one of the sets of facets, and duplicate the other
set.
The resulting poset is an Eulerian poset with the same $(d-2)$-skeleton
as the crosspolytope.

However, in the case of a simplicial polytope, we can reconstruct a
single missing rank of lower rank (dimension less than $d-1$).
Write $P\setminus P_r$ for the subposet of $P$ consisting of all the
elements of ranks other than $r+1$ (dimension $r$).

\begin{theorem}
Let  $d\ge 3$, and assume 
$P$ is a $d$-polytope and $Q$ is a rank $d+1$ Eulerian poset.
\begin{enumerate}
\item If $P$ is simplicial, $0\le r \le d-2$, and $P\setminus P_r\cong 
      Q\setminus Q_r$, then $P\cong Q$.
\item If $P$ is simple, $1\le r \le d-1$, and $P\setminus P_r\cong 
      Q\setminus Q_r$, then $P\cong Q$.
\item If $P$ is a simplex, $0\le r \le d-1$, and $P\setminus P_r\cong 
      Q\setminus Q_r$, then $P\cong Q$.
\end{enumerate}
\end{theorem}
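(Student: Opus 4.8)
The plan is to reduce the whole statement to Part~1 (the simplicial case) and then to prove that case by a local reconstruction anchored on a rigidity property of Boolean intervals. First I would dispose of Parts~2 and~3 by Eulerian duality. The order dual $Q^*$ of a rank $d+1$ Eulerian poset is again Eulerian of rank $d+1$, and a dimension-$k$ element of $Q$ corresponds to a dimension-$(d-1-k)$ element of $Q^*$; moreover $P$ is simple exactly when $P^*$ is simplicial, and the simplex is both simple and simplicial. Thus for Part~2 the dual data are a simplicial $P^*$, an Eulerian $Q^*$, and an isomorphism off dimension $r'=d-1-r$; as $r$ runs over $1\le r\le d-1$, $r'$ runs over $0\le r'\le d-2$, so Part~1 applies to $P^*,Q^*$ and gives $P\cong Q$. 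Part~3 is then immediate: the simplex is simplicial, covering $0\le r\le d-2$ by Part~1, and simple, covering $1\le r\le d-1$ by Part~2, so together $0\le r\le d-1$.

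The engine for Part~1 is a \emph{Boolean rigidity lemma}: if $L$ is an Eulerian poset of rank $n$ that agrees with the Boolean algebra $B_n$ on all ranks except possibly the coatom rank $n-1$, then $L\cong B_n$. I would prove this entirely elementarily from the interval conditions. The Euler relation for $L$ itself (one linear equation in the single unknown number of coatoms) forces exactly $n$ coatoms, and the Euler relation on each interval $[\{i\},\hat 1]$ over an atom forces each atom to lie below exactly $n-1$ coatoms, so $\sum_x|A(x)|=n(n-1)$ where $A(x)$ is the atom-set of a coatom $x$. For a fixed coatom $x$, the proper part of $[\hat 0,x]$ is a simplicial complex $\Delta_x$, and the diamond property on intervals $[C',x]$ with $C'$ of rank $n-3$ shows every ridge of $\Delta_x$ lies in exactly two facets, i.e.\ $\Delta_x$ is a closed pseudomanifold of dimension $n-3$; such a complex needs at least $n-1$ vertices, so $|A(x)|\ge n-1$. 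Averaging against $\sum_x|A(x)|=n(n-1)$ forces $|A(x)|=n-1$ and hence $\Delta_x=\partial\Delta^{\,n-2}$, so every $[\hat 0,x]\cong B_{n-1}$. Finally, since each $(n-2)$-subset lies below exactly two coatoms, a short covering count shows the $n$ sets $A(x)$ are precisely the $n$ distinct $(n-1)$-subsets, giving $L\cong B_n$. The dual statement (rigidity at the \emph{atom} rank) follows by dualizing inside $B_n$.

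With the lemma in hand I would prove Part~1 by localization and gluing, identifying $P\setminus P_r$ with $Q\setminus Q_r$ through the given isomorphism $\phi$. For $1\le r\le d-2$ every dimension-$(r+1)$ element $D$ of $Q$ corresponds to a simplex face of $P$, so $[\hat 0,D]$ agrees with $B_{r+2}$ off its coatom rank (the removed dimension $r$); the coatom form of the lemma gives $[\hat 0,D]\cong B_{r+2}$ and reconstructs the dimension-$r$ elements below $D$ as the $(r+1)$-subsets of $A(D)$, labelled by their atom-sets. I would then define $\psi$ on $Q_r$ by sending $x$ to the face of $P$ with vertex set $\phi(A(x))$; the global Euler relation gives $|Q_r|=|P_r|$, and since every dimension-$r$ element lies below some $D$ (as $[x,\hat 1]$ has rank $\ge 2$), surjectivity of $\psi$ forces bijectivity. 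The order relations then check out because $\psi(x)$ is a simplex, so in $P$ vertex containment is equivalent to face containment, matching the corresponding equivalence in each Boolean interval of $Q$. The case $r=0$ is handled symmetrically but requires genuine gluing: the atoms are exactly what is removed, so I would instead localize at the facets (or any simplex faces of dimension $\ge 2$), apply the \emph{atom} form of the lemma to get $[\hat 0,G]\cong B_{\dim G+1}$, and reconstruct the atoms below each $G$ together with their incidences; because the Boolean reconstruction is canonical, these agree along shared faces, and connectivity of the facet--ridge graph propagates a single consistent atom set, yielding $Q\cong P$.

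The main obstacle is the rigidity lemma, specifically showing that no ``exotic'' Eulerian completion of the missing rank can intrude. The crosspolytope example just before this theorem shows that at the \emph{top} rank one really can duplicate coatoms while staying Eulerian, so the argument must use something that fails there; that something is the closed-pseudomanifold consequence of the diamond property together with the vertex lower bound, which simultaneously forbids duplicate faces and alternative incidences and pins each local interval to be Boolean. I expect the second most delicate point to be the $r=0$ gluing: one must verify carefully that the canonically reconstructed atoms and all their incidences are consistent across overlapping facets, for which the uniqueness in the rigidity lemma and the connectivity of the facet--ridge graph are exactly what is needed.
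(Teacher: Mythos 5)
The survey states this theorem without a proof, so there is no in-paper argument to compare against; judged on its own merits, most of your proposal is sound. The duality reductions are correct (removing a rank commutes with order-dualizing, so Part~2 with $1\le r\le d-1$ becomes Part~1 with $0\le r'=d-1-r\le d-2$, and Part~3 follows from the other two). Your Boolean rigidity lemma is also correct as you argue it: the global Euler relation forces $n$ coatoms, the atom intervals force $\sum_x|A(x)|=n(n-1)$, the diamond property makes each $\Delta_x$ pure with every ridge in exactly two facets, giving $|A(x)|\ge n-1$ and hence equality; since any two $(n-2)$-subsets of an $(n-1)$-set meet in an $(n-3)$-set, $\Delta_x=\partial\Delta^{n-2}$, and the overlap bound $|A(x)\cap A(y)|\ge n-2$ kills repeated atom-sets. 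The case $1\le r\le d-2$ of Part~1 then goes through essentially as you sketch ($\psi$ is well defined because $A(x)$ is intrinsic, the Euler relation gives $|Q_r|=|P_r|$, and surjectivity plus cardinality yields the bijection and the order checks). Note, too, that your lemma and its dual already settle Part~3 completely, including its endpoint ranks.

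The genuine gap is the $r=0$ case of Part~1 (which, via your own reduction, is also load-bearing for Part~2 at $r=d-1$). Your gluing claim --- ``because the Boolean reconstruction is canonical, these agree along shared faces, and connectivity of the facet--ridge graph propagates a single consistent atom set'' --- does not hold as stated. The labeling of an atom $x$ inside $[\hat 0,D]$ is canonical \emph{within} $D$, but two faces $D,D'$ above $x$ need only share limited data: for $d=3$ the ridge shared by two facets is an edge, below which sit exactly two atoms that the edge's interval cannot distinguish, so adjacent facets' labelings need not agree; for $d\ge 4$ consistency propagates only along ridges lying \emph{above} $x$, and the assertion that the faces of $Q$ above $x$ are ridge-connected through faces above $x$ is precisely the star structure of $Q$ you are trying to prove --- you may not assume it. Concretely, in a simplicial $3$-polytope consider a hypothetical atom whose set of edges above it is the cut $\delta(\{v,a\})$ for adjacent vertices $v,a$: every triangle then contains $0$ or $2$ of these edges, all diamond and Euler conditions on intervals involving this atom are satisfied, yet the triangle $vbc$ labels it $v$ while the triangle $wab$ labels it $a$. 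Such exotic local data pass every test your gluing relies on; excluding them requires a genuinely global counting or rigidity argument (for $d=3$ it amounts to showing that the only exact double covers of the edge set by $f_0$ nonempty cuts are the vertex stars), and your proposal supplies no such argument. Until that is closed, Part~1 at $r=0$ and Part~2 at $r=d-1$ remain unproved.
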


\section{Related Issues on Graphs and $k$-Skeleta}

We review briefly some other work on graphs and $k$-skeleta of polytopes.

A relatively easy way of reconstructing simple polytopes from their graphs
would have followed from the truth of a conjecture by Perles:
The facet subgraphs of a simple $d$-polytope are exactly all the 
$(d-1)$-regular, connected, induced, nonseparating subgraphs of the 
graph of the polytope.  However, the conjecture is false (even when the
subgraph is required to be $(d-1)$-connected), as shown by Haase and
Ziegler \cite{haase-ziegler}.  In the known 4-dimensional counterexamples
the offending subgraphs are not planar, so they ask if adding planarity is
enough to guarantee a facet of a 4-polytope.  This extends to higher dimensions 
by asking for the following.  Are the facet subgraphs of a simple $d$-polytope
exactly all the $(d-1)$-regular, induced, nonseparating subgraphs that are
isomorphic to the graph of a $(d-1)$-polytope?  However, since there is no easy
characterization of graphs of higher dimensional polytopes, this would not be
an effective characterization.  Adiprasito, Kalai and Perles \cite{akp}
suggested that  ``isomorphic to the graph of a $(d-1)$-polytope'' could be 
weakened to ``isomorphic to the graph of a homology $(d-2)$-sphere.''

We have seen definitions of neighborly, centrally symmetric neighborly, and
neighborly cubical polytopes.  All can be defined in terms of
$k$-equivalence with familiar polytopes of higher dimensions.  
Here we mention two other such classes of polytopes.
A {\em prodsimplicial-neighborly} polytope \cite{matschke} is a 
polytope with the same $k$-skeleton (for specified $k$) as the Cartesian
product of simplices.
Matschke et al.\ \cite{matschke} construct prodsimplicial-neighborly polytopes,
including polytopes of dimension $2k+r+1$ that are $k$-equivalent to
the product of $r$ simplices.  The latter are Minkowski sums of cyclic
polytopes, and are examples of {\em Minkowski neighborly} polytopes, studied in
\cite{adiprasito}.

Several examples mentioned above (including cubical neighborly
polytopes) are created by projection from higher dimensional polytopes.
As another example, Sanyal and Ziegler \cite{sanyal} construct $d$-dimensional
projections of polytopes that are $(\lfloor d/2\rfloor-1)$-equivalent
to the $r$-fold product of $m$-gons, for every even $m\ge 4$ and every
$d\le 2r$.
However, in general it is not clear when the
projection of a polytope preserves its $k$-skeleton.
R\"{o}rig and Sanyal \cite{rorig} study obstructions to the existence of
projections of polytopes that preserve $k$-skeleta.

So far we have always assumed that the graph or $k$-complex under consideration
is known to be the graph or $k$-skeleton of a polytope.  
However, we have no characterization of graphs of polytopes of 
dimensions four and higher. 
Pfeifle, Pilaud and Santos \cite{pfeifle} review necessary conditions for 
a graph to be polytopal, construct families of graphs that satisfy these
conditions but are not polytopal, and investigate polytopality of products
of graphs.  In particular, they show the following theorem.

\begin{theorem}[\cite{pfeifle}]
The Cartesian product of graphs is
the graph of a simple polytope if and only 
each of its factors is the graph of a simple polytope.
\end{theorem}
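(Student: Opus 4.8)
The plan is to treat the two implications separately, with the forward implication being routine and the converse carrying essentially all of the content.

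For the \emph{if} direction, I would use that the product construction is compatible with both hypotheses. The graph of a product polytope $P_1\times P_2$ is exactly the Cartesian product $G(P_1)\mathbin{\Box}G(P_2)$: a vertex of the product is a pair of vertices, and the edges at $(x_1,x_2)$ are the pairs $(e_1,x_2)$ for $e_1$ an edge of $P_1$ at $x_1$ together with the pairs $(x_1,e_2)$ for $e_2$ an edge of $P_2$ at $x_2$. Moreover $P_1\times P_2$ is simple exactly when both factors are, since the degree of $(x_1,x_2)$ is $\deg(x_1)+\deg(x_2)$, and this equals $\dim P_1+\dim P_2=\dim(P_1\times P_2)$ at every vertex precisely when each factor is simple. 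Hence if $G_i=G(P_i)$ for simple polytopes $P_i$, then $G_1\mathbin{\Box}G_2=G(P_1\times P_2)$ is the graph of a simple polytope.

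For the \emph{only if} direction, suppose $G=G_1\mathbin{\Box}G_2$ is the graph of a simple $d$-polytope $P$. First I would record the numerical constraints: since the degree of $(x_1,x_2)$ is $\deg_{G_1}(x_1)+\deg_{G_2}(x_2)$ and $P$ is $d$-regular, each $G_i$ must be regular, say $d_i$-regular with $d_1+d_2=d$; and since a Cartesian product of graphs is connected if and only if both factors are, each $G_i$ is connected. The strategy is then to reconstruct the face lattice $L$ of $P$ from $G$ via the Blind--Mani--Kalai theorem and to show that $L$ splits as a product $L_1\times L_2$ realizing the graph factorization. Having done so, fixing a vertex of the second factor exhibits $L_1$ as the face lattice of a face of $P$, hence of a genuine (and, being a face of a simple polytope, simple) $d_1$-polytope $P_1$ with $G(P_1)=G_1$, and symmetrically $G_2$ is the graph of a simple $d_2$-polytope; this is exactly the claim.

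To produce the splitting I would work with Kalai's good (abstract-objective-function) acyclic orientations, in which every face induces a subgraph with a unique source and a unique sink. The decisive lemma is that every good orientation $\mathcal{O}$ of $G$ factors as $\mathcal{O}_1\mathbin{\Box}\mathcal{O}_2$ for orientations $\mathcal{O}_i$ of the factors. Granting that the $2$-face of $P$ spanned by one first-coordinate edge and one second-coordinate edge at a common vertex is the corresponding quadrilateral, this factorization is forced: in a good orientation a quadrilateral $2$-face, having a unique source and sink, must have its two opposite first-coordinate edges oriented consistently and likewise its two second-coordinate edges, so the orientation of a first-coordinate edge is independent of the value of the second coordinate (and vice versa), which by connectedness of $G_2$ (respectively $G_1$) defines $\mathcal{O}_1$ and $\mathcal{O}_2$ unambiguously. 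Reading the faces off from $\mathcal{O}$ as in Kalai's reconstruction then exhibits each $\mathcal{O}$-face as a product of an $\mathcal{O}_1$-face and an $\mathcal{O}_2$-face, giving $L\cong L_1\times L_2$. The main obstacle is precisely the parenthetical assumption: proving that a $2$-face spanned by a first-coordinate and a second-coordinate edge is a quadrilateral rather than a longer polygon. This is where the Cartesian-product structure of the \emph{graph} (and the uniqueness of prime factorization under $\mathbin{\Box}$, used to make the two-coloring of edges by factor canonical) must be combined with simplicity of $P$; a priori nothing forbids such a $2$-face from being a pentagon or hexagon, and ruling this out --- equivalently, establishing the factorization of good orientations directly --- is the crux on which the whole argument turns.
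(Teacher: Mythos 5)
Note first that the survey you are checking against contains no proof of this statement: it is quoted from Pfeifle, Pilaud and Santos \cite{pfeifle}, so your attempt can only be judged against the original argument and against its own internal completeness --- and on the latter count it has a genuine gap, one you yourself name. The \emph{if} direction, the regularity and connectivity bookkeeping, and the propagation step (in an acyclic orientation of a quadrilateral with unique source and unique sink, opposite edges are parallel, so by connectedness of the other factor the orientation of a factor-1 edge is independent of the second coordinate) are all correct. But the entire mathematical content of the theorem sits in the step you leave as a ``parenthetical assumption'': that the 2-face of the unknown simple polytope $P$ spanned at a vertex by one edge from each factor is the quadrilateral $\{u_1,u_1'\}\times\{v_2,v_2'\}$. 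In \cite{pfeifle} establishing this product structure on the faces is precisely where the substantive work lies; a reduction of the theorem to that lemma, accompanied by an admission that the lemma is unproved, is a plan rather than a proof.

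The gap is not routine, and no local argument can close it. Two-faces of a polytope are induced cycles in its graph, but inducedness plus the product structure does not force mixed 2-faces to be quadrilaterals: in $K_3\mathbin{\Box}K_3$, the graph of the product of two triangles (a simple 4-polytope), the alternating ``mixed'' hexagons of the form $(a,x),(b,x),(b,y),(c,y),(c,z),(a,z)$ are induced 6-cycles in which every vertex meets exactly one edge from each factor; they satisfy every local condition available in your setup, yet they are not 2-faces of the product polytope. So excluding a pentagon or hexagon as the mixed 2-face of $P$ requires genuinely global input --- for instance Kalai's characterization of face subgraphs as the connected $k$-regular induced subgraphs initial with respect to some good orientation, applied with care, since ``goodness'' is itself defined through the face structure being reconstructed --- and your proposal supplies none. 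A secondary, smaller gap: even granting the quadrilateral lemma and the factorization $\mathcal{O}=\mathcal{O}_1\mathbin{\Box}\mathcal{O}_2$, the assertion that reading faces off from $\mathcal{O}$ exhibits every face as a ``box'' (hence $L\cong L_1\times L_2$) is stated without argument and needs its own induction. It is worth noting that you do not need the full lattice splitting at all: it suffices to show that $G_1\times\{v_2\}$ is the vertex set of a face of $P$ for one vertex $v_2$ of $G_2$ --- but that claim is again essentially equivalent to the missing lemma, which remains the crux on which, as you say, the whole argument turns.
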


\section{Open Problems}

We are left with many open problems.
The first is a conjecture of Gr\"{u}nbaum.

\begin{conjecture}[Gr\"{u}nbaum \cite{grunbaum}]
If a $k$-dimensional complex $\cal C$ is the $k$-skeleton of both a 
$d$-polytope and a $d''$-polytope, where $d\le d''$, then for every 
$d'$, $d\le d'\le d''$, there is a $d'$-polytope having $\cal C$ as its
$k$-skeleton.
\end{conjecture}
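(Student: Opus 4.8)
This is Grünbaum's conjecture and, as far as I know, it remains open; what follows is a plan of attack together with an indication of where I expect it to stall. Write $D(\mathcal{C})\subseteq{\bf Z}$ for the set of dimensions $d'$ in which the fixed $k$-complex $\mathcal{C}$ occurs as the $k$-skeleton of some $d'$-polytope. The conjecture asserts precisely that $D(\mathcal{C})$ is an interval. To show that a set of integers has no gaps it suffices to show that whenever $a<b$ both lie in it, so does $b-1$; thus the plan is to reduce the statement to a single descent step: if $d,d''\in D(\mathcal{C})$ with $d<d''$, then $d''-1\in D(\mathcal{C})$. Iterating this, anchored at the given low dimension $d$, produces $d''-1, d''-2, \ldots, d+1$ and fills the whole interval $[d,d'']$. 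Observe at the outset that $\mathcal{C}$ already fixes the vertex count $n=f_0(\mathcal{C})$, so every realization sought is a polytope with exactly $n$ vertices.

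The natural tool for the descent step is projection, guided by the one case where the phenomenon is completely understood. In Theorem~\ref{dim-ambig} the $d$-simplex and the cyclic polytopes of dimensions $2k+2\le d'\le d-1$ with $d+1$ vertices all carry the same $k$-skeleton, and these realizations are literally projections of one another: deleting the last coordinate sends the moment curve in ${\bf R}^{d'}$ to the moment curve in ${\bf R}^{d'-1}$, and the high neighborliness of cyclic polytopes is exactly what lets the $k$-skeleton survive each step. So my first attempt would be to take the given $d''$-realization $P''$ and project it generically to ${\bf R}^{d''-1}$, trying to show that every face of dimension at most $k$ survives. Dually, passing to Gale diagrams --- where a $d'$-polytope with $n$ vertices becomes a vector configuration in dimension $n-d'-1$ --- the condition ``$\mathcal{C}$ is the $k$-skeleton'' turns into a list of constraints on the small cofaces of the configuration, and the descent step becomes: raise the Gale-diagram dimension by one while preserving exactly those constraints.

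The hard part, and presumably the reason the conjecture has resisted proof, is the control of the $k$-skeleton under this descent. A generic projection to one lower dimension need not preserve even the graph, let alone the full $k$-skeleton, and the existence of projections that do preserve a prescribed skeleton is governed by the delicate obstructions studied by Sanyal and Ziegler \cite{sanyal} and by R\"{o}rig and Sanyal \cite{rorig}; there is no blanket guarantee that a skeleton-preserving projection of $P''$ exists. Worse, it is unclear how to bring the full hypothesis to bear: we are also handed a low-dimensional realization $P$ in dimension $d$, but $P$ and $P''$ may disagree completely above dimension $k$, so $P$ offers no obvious certificate that the intermediate projections of $P''$ retain their $k$-faces. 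Making the descent step unconditional --- or else devising a genuinely different interpolation between the two given realizations, one insensitive to their combinatorics above dimension $k$ --- is the crux on which any proof along these lines will turn.
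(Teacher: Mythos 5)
You have judged the situation correctly: this statement is Gr\"{u}nbaum's conjecture, which the paper lists in its Open Problems section precisely because no proof is known. There is no proof in the paper to compare against, and your proposal honestly does not claim to supply one. Your reduction of the conjecture to a single descent step (if $d,d''\in D(\mathcal{C})$ with $d<d''$, then $d''-1\in D(\mathcal{C})$) is logically sound --- iterating it does fill the interval, and the descent step is in fact equivalent to the conjecture --- but as you say, the step itself is entirely unproven, and that is where the whole difficulty lives. Your pointer to the projection obstructions of Sanyal--Ziegler \cite{sanyal} and R\"{o}rig--Sanyal \cite{rorig} correctly identifies why a naive projection argument cannot be pushed through: skeleton-preserving projections need not exist, and nothing in the hypothesis certifies them.

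One caveat about your motivating example: in Theorem~\ref{dim-ambig} the $k$-skeleton does not ``survive'' the coordinate-deletion projections in any substantive sense. For $d'\ge 2k+2$ the cyclic polytope with $d+1$ vertices is at least $(k+1)$-neighborly, so every polytope in that chain carries the \emph{complete} $k$-skeleton of the $d$-simplex; the shared skeleton is forced by neighborliness alone, and the projection structure is incidental. So this case offers no evidence that a skeleton-preserving projection mechanism exists in general, and it slightly overstates the promise of your line of attack. A second structural worry, which you gesture at but is worth making explicit: the conjecture quantifies over \emph{all} intermediate dimensions, and there is no reason the hypothetical $d'$-realization should be geometrically derived from $P''$ at all (its faces above dimension $k$ may be unrelated to those of both $P$ and $P''$), so any proof strategy tied to transforming $P''$ continuously or projectively is likely too rigid. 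Your closing sentence --- that one needs an interpolation insensitive to the combinatorics above dimension $k$ --- is the right diagnosis, and it is exactly what no one currently knows how to do.
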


Of course, we can dream and ask: for each $k$, characterize the polytopes 
whose $k$-skeletons determine the face lattice.  Here are some less
ambitious questions.

\begin{itemize}
\item (\cite{doolittle}) For $d\ge 5$ does there exist an integer $j$ such
      that every $d$-polytope with $j$ nonsimple vertices is determined by
      its 2-skeleton, but not every $d$-polytope with $j$ nonsimple vertices
      is determined by its 1-skeleton?
\item For $k>1$ does there exist an $n$-dimensional polytope that is 
      $k$-equivalent to
      the $d$-crosspolytope for $\lfloor (\frac{k+2}{k+1})d \rfloor\le n
      \le  \lfloor 3n/2\rfloor -1$?
      The proof of the upper bound $\lfloor 3n/2\rfloor-1$ for $k=1$ 
      (Part~(\ref{cross1}) of Theorem~\ref{espens}) is based on the fact 
      (proved by Halin \cite{halin}) that the Hadwiger number of the graph
      $K_{2,2,\ldots,2}$ is 
      $\lfloor 3n/2\rfloor$.  Is there an analogous theory for $k>1$?
\item Does there exist a $d$-polytope $P$ (not simple or simplicial),
      an Eulerian poset $Q$, and an $r$, $1\le r\le d-2$, such that
      $Q$ agrees with the face lattice of $P$ everywhere except
      at dimension $r$?
      Note that the number of elements of $P_r$ is determined by Euler's 
      formula.
\item It seems that for any reasonable reconstruction results involving  
      polytopes and Eulerian posets we will need to restrict to Eulerian
      lattices.  What are the best reconstruction results in this case?
\item What reconstruction results can we get for the abstract
      polytopes of Danzer and Schulte \cite{danzer-schulte}?
      See \cite[Section 7]{schulte} for a discussion of problems about
      skeleta of abstract polytopes.
\end{itemize}

\vspace{12pt}

\noindent \textbf{Acknowledgments}

\vspace{6pt}

Thanks to Karim Adiprasito, Joseph Doolittle, Eran Nevo, Isabella Novik, 
Raman Sanyal, Hailun Zheng and G\"{u}nter Ziegler for close reading of a 
draft and useful comments.

\end{document}